\documentclass[11pt,a4paper,leqno]{article}

\usepackage{latexsym,amsfonts,amssymb,euscript,amsthm}
\usepackage{amsmath}
\usepackage{paralist}
\usepackage{graphicx}
\usepackage{subcaption}
\usepackage{graphics} 
\usepackage{epsfig} 
\usepackage{graphicx}
\usepackage{epstopdf} 
\usepackage{float}
\usepackage[colorlinks=true]{hyperref}
\usepackage{imakeidx}
\usepackage{mathtools}
\usepackage{authblk}
\usepackage[backend=biber, style=numeric]{biblatex}
\addbibresource{references.bib}
\mathtoolsset{showonlyrefs}
\usepackage{color}
\hypersetup{urlcolor=blue, citecolor=red}

\textheight=8.2 true in
\textwidth=5.0 true in
\topmargin 30pt
\setcounter{page}{1}

\newtheorem{theorem}{Theorem}[section]
\newtheorem{corollary}{Corollary}

\newtheorem{lemma}[theorem]{Lemma}
\newtheorem{proposition}{Proposition}

\theoremstyle{definition}

\newtheorem{remark}{Remark}

\newcommand{\R} {\mathbb{R}}

\newcommand{\eps}{\epsilon}
\newcommand{\eto}{\stackrel{\eps\to 0}{\longrightarrow}}
\newcommand{\weto}{\stackrel{\eps\to 0}{\rightharpoonup}}
\def\eto{\buildrel \epsilon\to 0\over\longrightarrow }

\makeindex[intoc]
\title{Elliptic Equations in Weak Oscillatory Thin Domains: Beyond Periodicity with Boundary-Concentrated Reaction Terms}

\date{}

\author[1]{Pricila S. Barbosa\thanks{e-mail: pricilabarbosa@utfpr.edu.br}}

\author[2]{Manuel Villanueva-Pesqueira\thanks{e-mail: mvillanueva@comillas.edu. Partially supported by PID2022-137074NB-I00 funded by MCIN}}

\affil[1]{ Departamento de Matem\'atica, Universidade Tecnológica Federal do Paraná, Londrina, Brazil}

\affil[2]{Grupo de Dinámica No Lineal, Universidad Pontificia Comillas, ICAI, Alberto Aguilera 25
	28015 Madrid, Spain}

\begin{document}
\maketitle
\begin{abstract}
In this paper we analyze the limit behavior of a family of solutions of the Laplace operator with homogeneous Neumann boundary conditions, set in a two-dimensional thin domain which presents weak oscillations on both  boundaries and  with terms concentrated in a narrow oscillating neighborhood of the top boundary. 
The aim of this problem is to study the behavior of the solutions as the thin domain presents oscillatory behaviors beyond the classical periodic assumptions,including scenarios like quasi-periodic or almost-periodic oscillations. We then prove that the family of solutions converges to the solution of a $1-$dimensional limit equation capturing the geometry and oscillatory behavior of  boundary of the domain and the narrow strip where the concentration terms take place. In addition, we include a series of numerical experiments illustrating the theoretical results obtained in the quasi-periodic context.

\end{abstract}
\noindent \emph{Keywords:} Thin domains; oscillatory boundary; homogenization; quasi-periodic; almost periodic; concentrating terms.
\section{Introduction}

We are keen on examining the behavior of the solutions to certain Elliptic Partial Differential Equations which are posed in a oscillating thin domain $R^\epsilon$. This domain is a slender region in $\R^{2}$ displaying oscillations along its boundary. It is described as the region between two oscillatory functions, that is, 
\begin{equation}\label{thin-appen}
R^\epsilon = \Big\{ (x, y) \in \R^{2} \; | \;  x \in I \subset \R,  \; -\eps k^1(x,\eps)< y < \epsilon k^2(x,\eps) \Big\},
 \end{equation}
 where $I=[a,b]$ is any interval in $\mathbb{R}$ and the functions $k^1,k^2$ satisfy certain hypotheses, see {\bf (H)}  in the next section. These two functions may present periodic oscillation, see Figure \ref{thin1 appen} for a example, but also more intricate situations beyond the classical periodic setting, see Figure \ref{quasithin} for an example with quasi-periodic oscillating boundaries. 

 \begin{figure}[H]
  \centering
  \includegraphics [width=8cm, height=4cm]{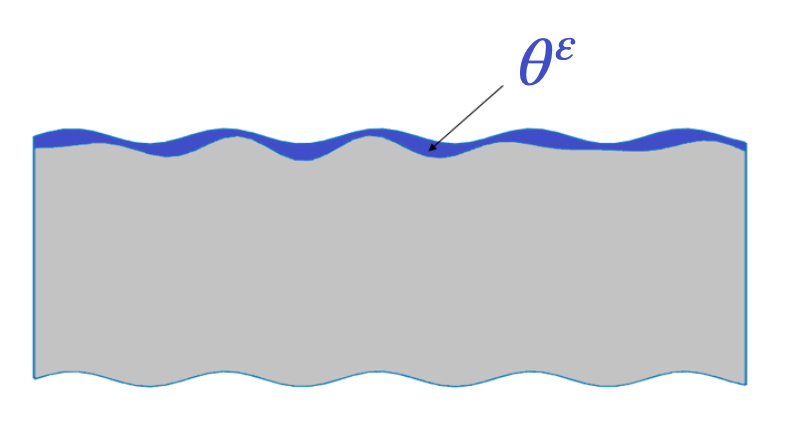}
   \caption{Thin domain $R^\eps$ with doubly periodic weak oscillatory boundary.}
   \label{thin1 appen}
\end{figure}

\begin{figure}[H]
  \centering
  \includegraphics [width=8cm, height=3.5cm]{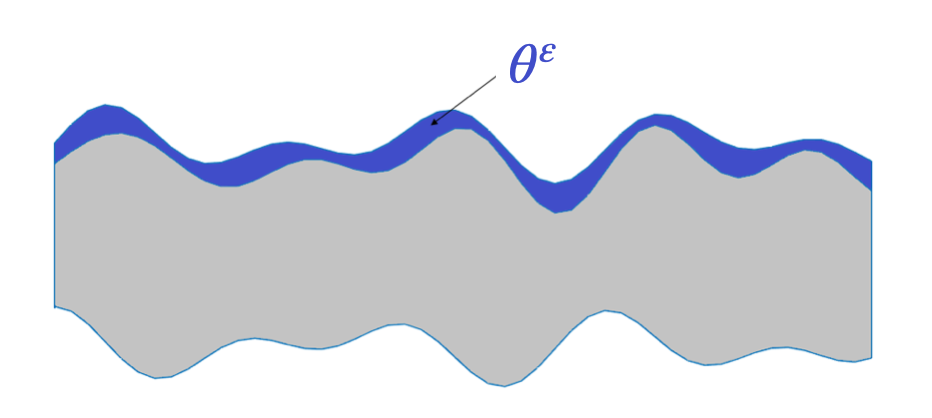}
   \caption{Thin domain $R^\eps$ with doubly quasi-periodic oscillatory boundary.}
   \label{quasithin}
\end{figure}

We are interested in analyzing the following linear elliptic equation with reaction terms concentrated in a very narrow oscillating neighborhood of the upper oscillating boundary
\begin{equation} \label{1OPI0}
\left\{
\begin{gathered}
- \Delta w^\epsilon + w^\epsilon = \frac{1}{\epsilon^\gamma} \chi_{\theta^\epsilon} f^\epsilon
\quad \textrm{ in } R^\epsilon, \\
\frac{\partial w^\epsilon}{\partial \nu^\epsilon} = 0
\quad \textrm{ on } \partial R^\epsilon,
\end{gathered}
\right. 
\end{equation}
where $f^\epsilon \in L^2(R^\epsilon)$, 
and $\nu^\epsilon$ is the unit outward normal to $\partial R^\epsilon$. Let be $\chi_{\theta^\epsilon}$ the characteristic function of the narrow strip defined by 
\begin{equation}\label{theta}
   \theta^\epsilon = \Big\{ (x, y) \in \R^{2} \; | \;  x \in I \subset \R,  \; \epsilon[ k^2(x,\eps)  - \epsilon^\gamma H_\eps(x)] < y < \epsilon k^2(x,\eps)  \Big\}, 
\end{equation}
 where $\gamma > 0$ and $H_\eps: I \subset \R \to \R$ is a $C^1$ quasi-periodic and bounded function.

It is important to highlight that we will obtain the limiting problem explicitly, assuming very general conditions on oscillatory functions. Afterward, we will study this limiting problem in interesting specific cases, such as quasi-periodic and even almost periodic functions. The authors firmly believe that analyzing problems in homogenization beyond the classical periodic setting is compelling from both an applied and a purely mathematical standpoint. In the real world, many materials and structures defy perfect periodicity and many real world problems involve multiple scales of heterogeneity or complexity.

 Homogenization, a powerful mathematical tool in the realm of materials science and engineering, has traditionally been confined to the study of materials with perfect periodic structures, see \cite{All,BenLioPa,CioDamGri,CioPau}. However, a growing body of research suggests that venturing beyond this classical periodic setting offers a rich landscape of possibilities, both in terms of practical applications and as a playground for mathematical exploration. Therefore, over the years, different homogenization techniques have been adapted and created to study problems beyond the periodic setting. In this context, the almost periodic framework offers an appropriate answer to certain limitations presented by periodic models.
To illustrate how this topic is at the forefront of research, we can cite several articles, knowing that there are many more, arranged from the most classic to the most recent, see \cite{ArmGlo, Bra, Koz,OleiZi,Jin}.

In the literature, there exists a multitude of studies dedicated to exploring the impact that oscillatory boundaries and thickness of the domain have on the solution behaviors of partial differential equations posed in thin domains. For further insight into this subject, one may refer to the following selected references and the studies cited within them \cite{,ArrVi2017,ArrCarPerSil,,ArrPer2013, HR,R}. However, the volume of research that avoids the consideration of periodic boundaries is significantly less. In this regard,
it is worth mentioning some articles have dealt with bounded domains with locally periodic boundaries, see \cite{ArrPer2011,ArrVil2016}. 
Equations set in thin domains with the upper and lower boundaries oscillating at different periods have also been studied. This means that the requirements for traditional homogenization are not met, as there is no cell that describes the domain. In fact, the behavior of the two boundaries is reflected in the limit. While \cite{ArrVil2014a} focuses on boundaries with strong oscillations, \cite{ArrVi2020} explores weak oscillations, yielding results characteristic of quasi-periodic functions

On the other hand, numerous studies have also focused on singular elliptic and parabolic problems, characterized by potential and reactive terms that are localized within a narrow region close of the boundary. First, equations in fixed and bounded domains were studied, see \cite{AAA,JiBer2009,JiBer2011}. 
Recently, problems combining both singular situations have been studied. That is, thin domains with oscillating boundaries with terms concentrated on the boundary, \cite{AAM,AAM1,Kape}. However, all of these fall within the assumptions of periodicity.

The paper is organized as follows: In Section 2, we establish the necessary hypotheses, define the notation, and present the central result of the article, which reveals the explicit limit problem under general hypotheses about the oscillating boundaries. In Section 3, adapting the technique introduced in \cite{ArrVi2020}, we will show that after appropiate change of variables the solutions of the problem (\ref{1OPI0}) can be aproximated by a one-dimensional problem with oscillating coefficients and with a force term reflecting the reaction terms on the thin neighborhood of the upper boundary. In Section 4, we proceed to analyze the limit of the one-dimensional equation derived in the previous section, thereby proving the main result of the paper. 
In Section 5, we derive the limit problem for specific cases of oscillating functions, including quasi-periodic and almost-periodic types. Finally, Section 6, we show some numerical evidences about the proved results.

\par\bigskip\bigskip 
 \section{Assumptions, notations and main result}

Let $k^i$ be a function, $i=1,2$,
\begin{equation*}
\begin{array}{rccl}
k^i: &I\times (0,1) &\longrightarrow &\R^+ \\
 &(x,\eps)&\longrightarrow &k^i(x,\eps)=k_\eps^i(x),
 \end{array}
 \end{equation*}
 such that
 \begin{itemize}
\item[\bf{(H.1)}] $k_\eps^i$ is a $C^1$ function and
\begin{equation}\label{main}
\eps\Big|\frac{d}{dx}k^i_\eps(x)\Big|\eto 0 \hbox{ uniformly in } I.
\end{equation}
\item[\bf{(H.2)}] There exist two positive constants independent of $\eps$ such that 
\begin{equation}\label{boundk}
0<C^i_1\leq k_\eps^i( \cdot) \leq C^i_2.
\end{equation}
\item[\bf{(H.3)}] There exists a function $K^i$ in $L^2(I)$ such that
$$k^i_\eps \weto K^i\quad \hbox{ w}-L^2(I).$$
\item[\bf{(H.4)}] There exists a function $P$ in $L^2(I)$ such that
$$\frac{1}{k^1_\eps+k^2_\eps} \weto P \quad \hbox{ w}-L^2(I).$$
\end{itemize}


 Indeed, the thickness of the domain has order $\eps$ and we say that the domain presents weak oscillations due to the convergence \eqref{main}.

 We will assume that the function which defines the narrow strip is a quasi-periodic smooth function. There are constants $H_0, H_1 \geq 0$ such that 
 \begin{equation}\label{est H}
 H_0 \leq H_\eps(x) \leq H_1 \,\,\mbox{for all}\,\, x \in I \,\,\mbox{and}\,\, \eps>0.
 \end{equation}
 
 First, we present the homogenized problem for the general case of functions satisfying the hypotheses {\bf (H)}. Subsequently, we direct our attention to several intriguing examples that provide insights into this matter. In particular, our overarching framework allows us consider quasi-periodic or almost periodic functions. See Figure \ref{quasithin} where  quasi-periodic functions are considered.  That is the case where  
 \begin{equation}\label{per}
 k^1_\eps(x)=h(x/\eps^\alpha), \quad k^2_\eps(x)=g(x/\eps^\beta),
 \end{equation}
with $0<\alpha, \beta<1$ and the functions $g,h \,: \R \to \R $ are $C^1$ quasi-periodic functions. 
Note that the extensively studied structure known as the periodic setting is included within this framework.

 The variational formulation of (\ref{1OPI0}) is the following:  find $w^\epsilon \in H^1(R^\epsilon)$ such that 
\begin{equation*} \label{VFP}
\int_{R^\epsilon} \Big\{ \displaystyle\frac{\partial w^\epsilon}{\partial x}\displaystyle\frac{\partial \varphi}{\partial x} + \displaystyle\frac{\partial w^\epsilon}{\partial y}\displaystyle\frac{\partial \varphi}{\partial y}
+ w^\epsilon \varphi \Big\} dx dy =  \frac{1}{\eps^\gamma}\int_{\theta^\epsilon} f^\epsilon \varphi dx dy, 
\, \forall \varphi \in H^1(R^\epsilon).
\end{equation*}

Observe that, for fixed $\eps>0$, the existence and uniqueness of solution to problem (\ref{1OPI0}) is guaranteed by Lax-Milgram Theorem. Then, we will analyze the behavior of the solutions as the parameter $\eps$ tends to zero. Particularly, by adapting the procedure demonstrated in \cite{ArrVi2020}, we initially implement a suitable change of variables. This enables us to substitute, in a certain sense, the original problem \eqref{1OPI0} posed in a 2-oscillating thin domain into a simpler problem with oscillating coefficients posed in an interval of $\mathbb{R}$. Notice that, this fact is in agreement with the intuitive idea that the family of solutions $w^\eps$ should converge to a function of just one variable as $\eps$ goes to zero since the domain shrinks in the vertical direction. Subsequently,  by using the previous results and adapting well-known techniques in homogenization we obtain explicitly the homogenized limit problem for the general case.

Due to the order of the height of the thin domains $R^\eps$ it makes sense to
 consider the following measure in thin domains
$$\rho_{\eps}(\mathcal{O}) = \frac{1}{\eps}|\mathcal{O}|, \; \forall\, \mathcal{O} \subset R^\eps.$$
 The rescaled Lebesgue measure $\rho_{\eps}$ allows
us to preserve the relative capacity of a measurable subset $\mathcal{O} \subset R^\eps$.
Moreover, using the previous measure we introduce the spaces $L^p( R^\eps, \rho_\eps)$ and  $W^{1,p}( R^\eps, \rho_\eps)$, for $1\leq p < \infty$
endowed with the norms obtained rescaling the usual norms by the factor $\frac{1}{\eps}$, that is, 
\begin{align*}
|||\varphi|||_{L^p(R^\eps)} &= \eps^{-1/p}||\varphi||_{L^p(R^\eps)},  \quad \forall \varphi \in L^p(R^\eps),\\
|||\varphi|||_{W^{1,p}( R^\eps)} &= \eps^{-1/p}||\varphi||_{W^{1,p}( R^\eps)}, \quad \forall \varphi \in W^{1,p}(R^\eps).
\end{align*}
It is very common to consider this kind of norms in works involving thin domains, see e.g. \cite{HR,R,PriRi}.

 Then, assuming that
 \begin{equation}\label{limit f12}
\frac{1}{\eps^{\gamma+1}}\int_{-\eps k^1_\eps(x)}^{\eps k^2_\eps(x)} \chi_{\theta^\epsilon}(x,y)f^\eps(x,y)dy \weto f_0(x) \quad \hbox{w}-L^2(I),
\end{equation}
 the main result obtained  is the following:
 
 \begin{theorem}\label{main appen}
Let $w^\eps$ be the solution of problem (\ref{1OPI0}). Given the function $P$ introduced in hypothesis {\bf (H.4)}, the definition of $f_0$ as provided in \eqref{limit f12}, and denoting $\hat{f}=\displaystyle\frac{f_0}{K^1 + K^2}$,  then we have 
$$
 w^\eps \to \hat w, \hbox{ w}-H^1(I),
$$
$$|||w^\eps - \hat w|||_{L^2(R^\eps)} \to 0,$$
 where $\hat w \in H^1(I)$ is the weak solution of  the following Neumann problem
\begin{equation*}\label{homogenized problem1}
\left\{
\begin{gathered}
-\frac{1}{P(K^1+K^2)}  {\hat w}_{xx} + \hat w = \hat f, \quad x \in I, \\
\hat w_x(a) = \hat w_x(b) = 0.
\end{gathered}
\right.
\end{equation*}
\end{theorem}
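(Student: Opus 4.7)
The plan is to follow the two-step strategy outlined in the introduction, corresponding to Sections~3 and~4. In the first step I would invoke Section~3's change of variables (adapted from \cite{ArrVi2020}), which straightens the oscillating thin domain $R^\eps$ onto a fixed reference cylinder and, up to an error vanishing in the rescaled norm $|||\cdot|||_{L^2(R^\eps)}$, replaces the 2D solution $w^\eps$ by a function $u^\eps(x)$ depending only on the longitudinal variable. This $u^\eps$ solves a 1D Neumann problem of Sturm--Liouville type on $I$, with oscillating coefficients built from $k^1_\eps + k^2_\eps$ and right-hand side
\[
F^\eps(x) := \frac{1}{\eps^{\gamma+1}} \int \chi_{\theta^\eps}(x,y)\, f^\eps(x,y)\, dy,
\]
which by \eqref{limit f12} converges weakly in $L^2(I)$ to $f_0$. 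Thus the original 2D problem reduces to homogenizing a scalar 1D equation with weakly converging coefficients.

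The second step is to pass to the homogenization limit of $u^\eps$. Uniform coercivity from (H.2) gives $\|u^\eps\|_{H^1(I)} \leq C$ via Lax--Milgram, and I extract a subsequence $u^\eps \weto \hat w$ in $H^1(I)$, strongly in $L^2(I)$ and $C(\bar I)$. The key compactness observation is that the flux
\[
\sigma^\eps := (k^1_\eps + k^2_\eps)\, u^\eps_x
\]
is bounded not only in $L^2(I)$ but even in $H^1(I)$: indeed, using the 1D equation, $(\sigma^\eps)_x = (k^1_\eps + k^2_\eps) u^\eps - F^\eps$ is uniformly $L^2$-bounded. Rellich--Kondrachov then yields strong convergence $\sigma^\eps \to \sigma$ in $L^2(I)$ along a subsequence.

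To identify $\sigma$ I would rewrite $u^\eps_x = \sigma^\eps \cdot \frac{1}{k^1_\eps + k^2_\eps}$: the second factor is uniformly bounded in $L^\infty(I)$ by (H.2) and weakly-$\ast$ converges to $P$ by (H.4), so the strong-$\times$-weak product rule gives $u^\eps_x \weto \sigma P$ in $L^2(I)$. Matching this with $u^\eps_x \weto \hat w_x$ forces $\sigma = \hat w_x/P$. Passing to the limit in the weak form of the 1D equation --- using (H.3) combined with the strong $L^2$-convergence of $u^\eps$ for the zeroth-order term, and \eqref{limit f12} for the source --- produces the homogenized equation for $\hat w$, which after dividing by $K^1+K^2$ and invoking $\hat f = f_0/(K^1+K^2)$ takes the stated form. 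Uniqueness of the limit problem by Lax--Milgram removes the subsequence, and the two convergence statements on $R^\eps$ then follow by combining the identified limit with the Section~3 approximation in $|||\cdot|||_{L^2(R^\eps)}$.

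The main obstacle is the strong compactness of the flux $\sigma^\eps$: without it there is no way to pass to the limit in the product $(k^1_\eps+k^2_\eps)\, u^\eps_x$ of two merely weakly convergent sequences, and the non-periodic nature of the oscillations forbids a direct two-scale argument. The gain of regularity coming from the equation itself, coupled with the uniform $L^\infty$ bound on $1/(k^1_\eps+k^2_\eps)$, is precisely what unlocks the weak-$\times$-strong identification that pins down the homogenized coefficient $\tfrac{1}{P(K^1+K^2)}$.
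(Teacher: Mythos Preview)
Your proposal is correct and follows essentially the same approach as the paper: the reduction to the one-dimensional Sturm--Liouville problem with oscillating coefficient $K_\eps=k^1_\eps+k^2_\eps$ is exactly Theorem~\ref{main-reduction}, and your homogenization step in Section~4 --- the $H^1(I)$ a~priori bound, the $H^1$-compactness of the flux $\sigma^\eps=K_\eps u^\eps_x$ obtained from the equation, the identification $\sigma=\hat w_x/P$ via the strong--weak product and hypothesis \textbf{(H.4)}, and the passage to the limit in the weak form using \textbf{(H.3)} and \eqref{limit f12} --- reproduces the paper's argument verbatim. The only cosmetic difference is that the paper carries out the Section~3 reduction through an explicit chain of three intermediate problems (Lemmas~\ref{1 transform}, \ref{2 transform}, \ref{3 transform}), whereas you invoke it as a black box.
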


 Finally, we will get the boundary limiting problem for interesting particular cases such as quasi-periodicity or almost periodic framework.

\section{Reduction to an one-dimensional problem with oscillating coefficients}

In this section, we focus on the study of the elliptic problem \eqref{1OPI0} and begin by performing a change of variables that simplifies the domain in which the original problem is posed. As a matter of fact, we will be able to reduce the study of \eqref{1OPI0} in the thin domain $R^\eps$ to the study of an elliptic problem with oscillating coefficients capturing the geometry and oscillatory behavior of the open sets where the concentrations take place in the lower dimensional fixed domain $I$. This dimension reduction will be the key point to obtain the correct limiting equation. 
In order to state the main result of this section, let us first make some definitions. We will denote by 

\begin{equation} \label{def-eta}
\displaystyle \eta^i(\eps)=\sup_{x \in I}\Big|\eps \frac{d }{dx}k^i_\eps(x) \Big|>0,\quad i=1,2\quad \hbox{ and }\quad  \eta(\eps)=\eta^1(\eps)+\eta^2(\eps).
\end{equation}
Observe that from hypothesis {\bf (H.1)} we have $\eta(\eps)\eto 0$

Also, we denote by $K_\eps(x)=k^1_\eps(x)+k_\eps^2(x)$ (that is $\eps K_\eps(x)$ is the thickness of the thin domain $R^\eps$ at the point $x\in I$), so from (\ref{boundk}) there exist constants $K_0, K_1>0$ independent of $\eps>0$ such that
\begin{equation}\label{est K}
 0<K_0 \leq K_\eps(x) \leq K_1
 \,\,\mbox{for all}\,\, I \subset \R.  
\end{equation}

Now we will consider the following one dimensional problem

\begin{equation}\label{transformed-problem-hat}
\left\{
\begin{gathered}
- \frac{1}{K_\eps(x)}(K_\eps(x)  \hat{w}^\eps_x)_x + \hat w^\eps = \frac{1}{\eps^\gamma}\hat f^\eps
\quad \textrm{ in } I, \\
\hat{w}^\eps_x(a)=\hat{w}^\eps_x(b)=0
\end{gathered}
\right. 
\end{equation}
where 
\begin{equation}\label{fepsgorro}
\displaystyle \hat f^\eps(x)=\frac{1}{\eps K_\eps(x)}\int_{-\eps k^1_\eps(x)}^{\eps k^2_\eps(x)} \chi_{\theta^\epsilon}(x,y)f^\eps(x,y)dy.
\end{equation}

The key result of this section is the following

\begin{theorem}\label{main-reduction}
Let $w^\eps$ and $\hat w^\eps$ be the solutions of problems (\ref{1OPI0}) and (\ref{transformed-problem-hat}) respectively. Then, we have 
\begin{equation*}\label{main-estimate}
|||w^\eps-\hat w^\eps|||_{H^1(R^\eps)} \eto 0.
\end{equation*}
\end{theorem}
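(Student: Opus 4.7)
The plan is to adapt the strategy of \cite{ArrVi2020}: transform both problems to a common fixed rectangle, observe that the embedded 1D solution $\hat w^\eps$ \emph{almost} satisfies the transformed 2D equation up to an error controlled by the weak-oscillation parameter $\eta(\eps)$, and close the argument by a single energy estimate on the difference.

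First, introduce the diffeomorphism $\Phi_\eps\colon I\times(0,1)\to R^\eps$, $\Phi_\eps(x,s)=(x,-\eps k^1_\eps(x)+\eps K_\eps(x)s)$, with Jacobian $\eps K_\eps(x)$, and set $U^\eps:=w^\eps\circ\Phi_\eps$ together with $\hat U^\eps(x,s):=\hat w^\eps(x)$. The chain rule gives $\partial_y w^\eps=(\eps K_\eps)^{-1}\partial_s U^\eps$ and $\partial_x w^\eps=\partial_x U^\eps+\alpha_\eps\,\partial_s U^\eps$ with cross coefficient $\alpha_\eps(x,s):=[(k^1_\eps)'(x)-sK'_\eps(x)]/K_\eps(x)$, whose only uniform bound is the crucial $\eps\alpha_\eps=O(\eta(\eps))\to 0$ granted by (H.1)--(H.2). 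The transported bilinear form
$$a_\eps(U,\varphi):=\int_{I\times(0,1)}\Bigl[\eps K_\eps(U_x+\alpha_\eps U_s)(\varphi_x+\alpha_\eps\varphi_s)+\tfrac{U_s\varphi_s}{\eps K_\eps}+\eps K_\eps\, U\varphi\Bigr]dx\,ds$$
satisfies the identity $|||w^\eps-\hat w^\eps|||_{H^1(R^\eps)}^2=\eps^{-1}a_\eps(V^\eps,V^\eps)$ for $V^\eps:=U^\eps-\hat U^\eps$, so it is enough to prove $a_\eps(V^\eps,V^\eps)=o(\eps)$.

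The central observation is that $\hat U^\eps$ almost satisfies the same transported variational problem as $U^\eps$: multiplying the weak form of \eqref{transformed-problem-hat} by $\eps$ and invoking the definition \eqref{fepsgorro} of $\hat f^\eps$, a Fubini argument in $s$ and an integration by parts in $x$ yield the representation
$$a_\eps(V^\eps,\varphi)=-\int_{I\times(0,1)}\eps K_\eps\,\alpha_\eps\,\hat w^\eps_x\,\varphi_s\,dx\,ds+\mathcal{R}^\eps(\varphi),$$
where $\mathcal{R}^\eps(\varphi)$ gathers the discrepancy between the concentrated forcing $\chi_{\theta^\eps}f^\eps$ and its $s$-average $\hat f^\eps$ and, crucially, vanishes on every $s$-independent test. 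Taking $\varphi=V^\eps$, bounding the first term by Cauchy--Schwarz with the pairing $\eps\alpha_\eps=O(\eta(\eps))$ and the uniform 1D bound $\|\hat w^\eps_x\|_{L^2(I)}=O(1)$, bounding $\mathcal{R}^\eps(V^\eps)$ by the Poincar\'e inequality $\|V^\eps-\overline{V^\eps}\|_{L^2}\leq C\|V^\eps_s\|_{L^2}$ where $\overline{V^\eps}(x):=\int_0^1 V^\eps\,ds$, and absorbing the resulting $\|V^\eps_s\|_{L^2}^2$ contributions via the coercivity coefficient $1/(\eps K_\eps)$ of $a_\eps$ in the $s$-direction, gives $a_\eps(V^\eps,V^\eps)\leq C\bigl(\eta(\eps)^2+\text{concentration error}\bigr)\cdot\eps=o(\eps)$, as required.

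The principal technical obstacle is the unbounded cross coefficient $\alpha_\eps$, which may blow up like $\eta(\eps)/\eps$. The remedy, used throughout, is to never bound $\alpha_\eps$ in isolation but always paired with the Jacobian factor $\eps$, so that only the small combination $\eps\alpha_\eps=O(\eta(\eps))\to 0$ ever appears; this single mechanism is what makes the weak-oscillation hypothesis (H.1) alone sufficient for the dimension reduction of Theorem \ref{main-reduction}. A secondary challenge is controlling the concentrated-forcing defect $\mathcal{R}^\eps$, which requires the Poincar\'e-in-$s$ inequality together with the concentration estimates afforded by the uniform bound \eqref{est H} on $H_\eps$.
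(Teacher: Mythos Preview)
Your approach is correct and genuinely more economical than the paper's. The paper factors the argument through a chain of three intermediate problems: first the diffeomorphism $L^\eps$ shifts the bottom oscillation to the top (Lemma~\ref{1 transform}, comparing $w^\eps\circ L^\eps$ to the solution $v^\eps$ of \eqref{OPI01 appen}); then $S^\eps$ straightens to the rectangle $Q$, and the resulting $u^\eps$ is compared to the solution $w_1^\eps$ of the simplified problem \eqref{transformed-problem1} in which the cross derivatives are dropped (Lemma~\ref{2 transform}); finally $w_1^\eps$ is compared to the one-dimensional solution $u_1^\eps=\hat w^\eps$ via a Poincar\'e-in-$y$ argument (Lemma~\ref{3 transform}). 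You instead compose the two changes of variables into the single map $\Phi_\eps$, test the exact difference $V^\eps=U^\eps-\hat U^\eps$ against itself, and exploit directly that $\hat U^\eps$ satisfies the transported equation up to the cross term $\eps K_\eps\alpha_\eps\hat w^\eps_x\varphi_s$ and the concentration defect $\mathcal{R}^\eps$. This collapses three comparison lemmas into one energy estimate; the paper's decomposition is more modular (each step isolates one geometric effect) but yours yields the same rate $O(\eta(\eps)+\eps)$ with less overhead.

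One point in your sketch deserves care: the plain Poincar\'e bound $\|V^\eps-\overline{V^\eps}\|_{L^2(Q)}\le C\|V^\eps_s\|_{L^2(Q)}$ is \emph{not} by itself sufficient to control $\mathcal{R}^\eps(V^\eps)$ for all $\gamma>0$, since Cauchy--Schwarz over $Q$ only gives a factor $\eps^{1-\gamma/2}$. What you need (and what your remark ``together with the concentration estimates afforded by \eqref{est H}'' presumably intends) is the strip-localized version $\|V^\eps-\overline{V^\eps}\|_{L^2(\tilde\theta^\eps)}\le C\eps^{\gamma/2}\|V^\eps_s\|_{L^2(Q)}$, obtained from $|V^\eps(x,s)-\overline{V^\eps}(x)|\le\|V^\eps_s(x,\cdot)\|_{L^2(0,1)}$ together with the width $\le\eps^\gamma H_1/K_0$ of $\tilde\theta^\eps$. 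With this refinement your bound $|\mathcal{R}^\eps(V^\eps)|\le C\eps\|V^\eps_s\|_{L^2(Q)}$ holds uniformly in $\gamma$, and the absorption into the $(\eps K_\eps)^{-1}\|V^\eps_s\|^2$ part of $a_\eps$ goes through exactly as you describe. This is the same mechanism the paper uses in the proof of Lemma~\ref{3 transform}.
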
 

\par\bigskip

In order to prove this result, we will need to obtain first some preliminary lemmas.  
We start transforming equation \eqref{1OPI0} into an equation in the modified thin domain
\begin{equation}\label{thin-intro1 appen}
R_a^\epsilon = \Big\{ (\bar x,\bar y) \in \R^{2} \; | \; \bar x \in I,  \; 0 < \bar y < \eps \,K_\eps(\bar x) \Big\}.
 \end{equation}
(see Figure \ref{thin2 appen}) 
where it can be seen that we have transformed the oscillations of both boundaries into oscillations of just the boundary at the top.  For this, we considering  the following family  of diffeomorphisms
\begin{equation*}
\begin{array}{rl}
L^\eps: R_a^\epsilon &\longrightarrow R^\eps \\
 (\bar x, \bar y)&\longrightarrow (x, y) := (\bar x, \,\bar y\, -\, \eps \,k^1_\eps(\bar x) ).
 \end{array}
\end{equation*}
Notice that the inverse of this diffeomorphism is  $(L^\eps)^{-1}(x,y)=(x,\, y \,+ \,\eps\, k^1_\eps(x))$. Moreover, from the structure of these diffeomorphisms and hypothesis {\bf (H.1)} we easily get that there exists a constant $C$ such that the Jacobian Matrix of $L^\eps$ and $(L^\eps)^{-1}$ satisfy 
\begin{equation}\label{bound-jacobian}
\|JL^\eps\|_{L^\infty},\|J(L^\eps)^{-1}\|_{L^\infty}\leq C.
\end{equation}
Moreover, we also have  $det(JL^\eps)(\bar x, \bar y)=det(J(L^\eps)^{-1})( x, y)=1$. 

We will show that the study of the limit behavior of the solutions of \eqref{1OPI0} is equivalent to analyze the behavior of the solutions of the following problem
\begin{equation} \label{OPI01 appen}
\left\{
\begin{gathered}
- \Delta v^\epsilon + v^\epsilon = \frac{1}{\eps^\gamma}\chi_{\theta^\epsilon_1}f_1^\eps
\quad \textrm{ in } R_a^\epsilon, \\
\frac{\partial v^\epsilon}{\partial \nu^\epsilon} = 0
\quad \textrm{ on } \partial R_a^\epsilon,
\end{gathered}
\right. 
\end{equation}
where the function $\chi_{\theta^\epsilon_1}: \R^{2} \rightarrow \R$ is the characteristic function of the narrow strip $\theta^\eps_1$ given by
\begin{equation}\label{theta1}
   \theta^\epsilon_1 = \Big\{ (\bar x, \bar y) \in \R^{2} \; | \;  \bar x \in I \subset \R,  \; \epsilon[ K_\eps(\bar x)  - \epsilon^\gamma H_\eps(\bar x)] < \bar y < \epsilon K_\eps(\bar x)  \Big\}. 
\end{equation}
The vector $\nu^\epsilon$ is the outward unit normal to $\partial R_a^\epsilon$  and 
\begin{equation}\label{def-f1eps}
f_1^\eps= f^\eps \circ L^\eps.
\end{equation}

\begin{figure}[H]
 \centering
   \includegraphics [width=8cm, height=3.5cm]{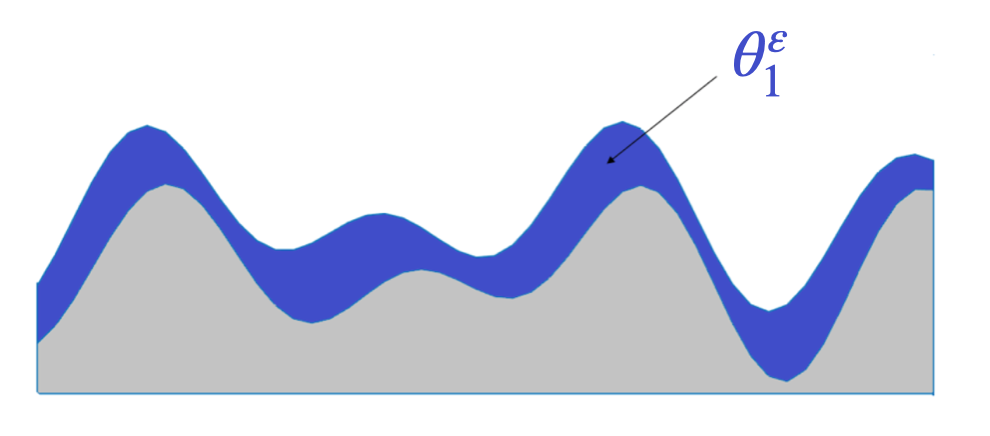}
   \caption{Thin domain $R_a^\epsilon$ obtained from $R^\eps$. }
   \label{thin2 appen}
\end{figure} 

\begin{remark}
Notice that 
\begin{equation}\label{a priori f_1}
\|f_1^\eps\|_{L^2(\theta^\epsilon_1)}^2=\int_{\theta^\epsilon_1}|f^\eps\circ L^\eps(\bar X)|^2d\bar X=\int_{\theta^\epsilon}|f^\eps(X)|^2|det(J(L^\eps)^{-1}))(X)|dX=\|f^\eps\|_{L^2(\theta^\epsilon)}^2,
\end{equation}
where $\bar X=(\bar x, \bar y)$ and $X=(x, y)$. Therefore, 
$|||f_1^\eps|||_{L^2(\theta^\epsilon_1)} = |||f^\eps|||_{L^2(\theta^\epsilon)}$ for all $f^\eps\in L^2(R^\eps)$. 
\end{remark}

 To achieve a better understanding of the behavior from the nonlinear concentrated term, we will analize the concentrating integral. We are now interested in analyzing concentrated integrals on the narrow strips $\theta_1^\epsilon$, to this end we will adapt results from \cite{AAM1} at Section 3. 
The next Lemma is about concentrating integrals, some estimates will be given in different Lebesgue and Sobolev-Bochner spaces.

\begin{lemma}\label{conc int}
Let $R_a^\eps$ and $\theta^\eps_1$ defined by (\ref{thin-intro1 appen}) and (\ref{theta1}) respectively. If $v^\eps \in W^{1,p}(R_a^\eps)$, there exists $C>0$ independent of $\eps$ such that, for $1- \frac{1}{p} < s \leq 1$, 
\begin{equation}\label{est veps1}
\frac{1}{\eps^\gamma}\int_{\theta_1^\eps}|v^\eps|^q d\bar X \leq C\|v^\eps\|^q_{L^q(I;W^{s,p}(0,\eps K_\eps(\bar x)))},\, \,\, \forall q \geq 1. 
\end{equation}
In particular, if $p \geq 2$, we have
\begin{equation}\label{est vesp2}
\frac{1}{\eps^\gamma}\int_{\theta_1^\eps}|v^\eps|^p d\bar X  \leq C\|v^\eps\|^q_{W^{1,p}(R^\eps_a)} ,\,\, \forall q \leq p.
\end{equation}
\end{lemma}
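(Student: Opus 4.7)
The plan is to reduce the two-dimensional concentration estimate to a one-dimensional trace-type inequality on vertical slices, exploiting the fact that the strip $\theta_1^\eps$ has vertical width of order $\eps^{1+\gamma}$, while the embedding $W^{s,p}(0,1)\hookrightarrow L^\infty(0,1)$ is valid precisely for $s>1-1/p$. First, by Tonelli's theorem I would write
$$\frac{1}{\eps^\gamma}\int_{\theta_1^\eps}|v^\eps|^q\,d\bar X=\frac{1}{\eps^\gamma}\int_I\Big(\int_{\eps K_\eps(\bar x)-\eps^{1+\gamma}H_\eps(\bar x)}^{\eps K_\eps(\bar x)}|v^\eps(\bar x,\bar y)|^q\,d\bar y\Big)\,d\bar x,$$
so that the inner integral is taken over a one-dimensional strip of length bounded by $C\eps^{1+\gamma}$ thanks to (\ref{est H}) and (\ref{est K}).

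Then I would rescale each vertical slice to the reference interval $(0,1)$ via $\bar y=\eps K_\eps(\bar x)t$, sending the strip to $(1-\eta_\eps(\bar x),1)$ with $\eta_\eps(\bar x)\leq C\eps^\gamma$ and producing a Jacobian factor $\eps K_\eps(\bar x)$. Setting $\tilde v^\eps(\bar x,t):=v^\eps(\bar x,\eps K_\eps(\bar x)t)$, the hypothesis $s>1-1/p$ yields the continuous embedding $W^{s,p}(0,1)\hookrightarrow C([0,1])$ with a universal constant, so that
$$\int_{1-\eta_\eps(\bar x)}^{1}|\tilde v^\eps(\bar x,t)|^q\,dt\,\leq\,\eta_\eps(\bar x)\,\|\tilde v^\eps(\bar x,\cdot)\|_{L^\infty(0,1)}^q\,\leq\, C\eps^\gamma\,\|\tilde v^\eps(\bar x,\cdot)\|_{W^{s,p}(0,1)}^q.$$
Combining these two steps and dividing by $\eps^\gamma$ absorbs the strip thinness and leaves a slicewise bound of the form $C\eps\,\|\tilde v^\eps(\bar x,\cdot)\|^q_{W^{s,p}(0,1)}$. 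I would then undo the rescaling, translating $\|\tilde v^\eps(\bar x,\cdot)\|_{W^{s,p}(0,1)}$ back to $\|v^\eps(\bar x,\cdot)\|_{W^{s,p}(0,\eps K_\eps(\bar x))}$ via the standard scaling of the $L^p$ norm and of the Gagliardo seminorm; since $L:=\eps K_\eps(\bar x)$ is controlled above and below by constant multiples of $\eps$, and since $s>1-1/p$ controls the seminorm scaling, all $\eps$-powers balance to produce a constant independent of $\eps$ at the slice level. Integration over $\bar x\in I$ then yields (\ref{est veps1}).

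For (\ref{est vesp2}), with $p\geq 2$ I would specialize (\ref{est veps1}) to $s=1$ and $q=p$, observing that by Fubini the Bochner space $L^p(I;W^{1,p}(0,\eps K_\eps(\bar x)))$ coincides with $W^{1,p}(R_a^\eps)$, with equivalent norms up to lower-order terms that are already controlled. The extension to general $q\leq p$ then follows from H\"older's inequality on the bounded interval $I$ together with the continuous inclusion $L^p(I)\hookrightarrow L^q(I)$.

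The main obstacle is the norm transfer back from $(0,1)$ to $(0,\eps K_\eps(\bar x))$: the $L^p$-part scales with factor $L^{-1/p}$ while the Gagliardo seminorm scales with $L^{s-1/p}$, and these different $\eps$-powers must combine with the $\eps^{1+\gamma}$ from the strip width and the $\eps^{-\gamma}$ normalization to yield a constant independent of $\eps$. The threshold $s>1-1/p$ is sharp both for the $L^\infty$-embedding invoked in the trace step and for keeping $sp-1>-1$, which is precisely what prevents the seminorm contribution from blowing up as $\eps\to 0$. This careful bookkeeping, adapted from Section 3 of \cite{AAM1}, is the technical heart of the argument.
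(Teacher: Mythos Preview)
Your proposal is correct and follows essentially the same route as the paper. The paper simply invokes \cite[Theorem 3.7]{AAM1} (with $H^s$ replaced by $W^{s,p}$) for (\ref{est veps1}), whereas you unpack what that argument amounts to---slice-wise use of the embedding $W^{s,p}(0,1)\hookrightarrow L^\infty(0,1)$ after rescaling, then tracking the $\eps$-powers---and for (\ref{est vesp2}) both you and the paper combine the inclusion into the Bochner space with H\"older on $I$.

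One small correction: the Bochner space $L^p(I;W^{1,p}(0,\eps K_\eps(\bar x)))$ does \emph{not} coincide with $W^{1,p}(R_a^\eps)$, since the Bochner norm omits the $\partial_{\bar x}$-derivative; what holds (and what the paper states, citing \cite[Proposition 3.6]{AAM1}) is only the continuous inclusion $W^{1,p}(R_a^\eps)\hookrightarrow L^p(I;W^{1,p}(0,\eps K_\eps(\bar x)))$ with constant independent of $\eps$. That inclusion is the direction you need, so your argument is unaffected.
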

\begin{proof}
The estimate (\ref{est veps1}) is true by changing the $H^s$ space by $W^{s,p}$, for $1 - \frac{1}{p} < s \leq 1$, in \cite[Theorem 3.7]{AAM1}, which implies that exists $C>0$ independent of $\eps >0$ such that 

\begin{equation*}
\frac{1}{\eps^\gamma}\int_{\theta_1^\eps}|v^\eps|^q d\bar X \leq C\|v^\eps\|^q_{L^q(I;W^{s,p}(0,\eps K_\eps(\bar x)))},\, \,\, \forall q \geq 1. 
\end{equation*}

Also it is not difficult to see that $W^{1,p}(R^\eps_a)$ is included in $L^p(I;W^{1,p}(0,\eps K_\eps(\bar x)))$ and, consequently, in $L^p(I;W^{s,p}(0,\eps K_\eps(\bar x)))$, with constant independent of $\eps>0$, analogously as \cite[Preposition 3.6]{AAM1}.

On the other hand, if $p>2$, we have that $\frac{p}{q}>1$ for all $q<p$, and then, if we define ${\bar K_\eps}(\bar x)=\eps K_\eps(\bar x)$,
\begin{align*}
 \|v^\eps\|^q_{L^q(I;W^{1,p}(0,\bar K_\eps(\bar x))} &=  \int_I\Big(\int_0^{\bar K_\eps(\bar x)}|v^\eps(\bar x, \bar y)|^p d\bar y\Big)^\frac{q}{p}d\bar x \\&+  \int_I\Big(\int_0^{\bar K_\eps(\bar x)}|\partial_{\bar y}v^\eps(\bar x, \bar y)|^p d\bar y\Big)^\frac{q}{p}d\bar x\\
 &\leq K_1^{\frac{p-q}{p}}\Big(\int_I\int_0^{\bar K_\eps(\bar x)}|v^\eps(\bar x, \bar y)|^p d\bar  y d\bar x\Big)^\frac{q}{p}\\& +  K_1^{\frac{p-q}{p}}\Big(\int_I\int_0^{\bar K_\eps(\bar x)}|\partial_ {\bar y}v^\eps(\bar x, \bar y)|^p d\bar  y d\bar x\Big)^\frac{q}{p}\\
&\leq C \|v^\eps\|^q_{W^{1,p}(R^\eps_a)}
\end{align*}
\end{proof}

We would like to obtain a uniform boundedness for the solutions of the problem (\ref{OPI01 appen}) for any $\eps>0$ and $\gamma>0$, so from now on we will assume that  $f^\eps$ satisfies the following hypothesis
\begin{equation}\label{est f}
\frac{1}{\eps^\gamma}|||f^\eps|||^2_{L^2(\theta^\eps)} \leq c,
\end{equation}
for some positive constant $c$ independent of $\eps>0$.

\begin{lemma}\label{est v}
Consider the variational formulation of (\ref{OPI01 appen}). Then, there exists $c_0>0$, also independent of $\eps$, such that 
\begin{equation*}\label{a priori1 appen}
|||v^\eps|||_{H^1(R^\eps_a)}\leq c_0.
\end{equation*}
\end{lemma}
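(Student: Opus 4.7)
The plan is the standard energy estimate obtained by testing the variational formulation of \eqref{OPI01 appen} against $v^\eps$ itself, combined with the concentration estimate of Lemma \ref{conc int} and the scaling hypothesis \eqref{est f}. The only subtlety is to track the powers of $\eps$ carefully when passing between the ordinary $L^2/H^1$ norms and the rescaled triple-bar norms.

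First I would take $\varphi=v^\eps$ in the weak formulation of \eqref{OPI01 appen}, yielding
\begin{equation*}
\|v^\eps\|_{H^1(R_a^\eps)}^2 \;=\; \frac{1}{\eps^\gamma}\int_{\theta_1^\eps} f_1^\eps\,v^\eps \,d\bar X.
\end{equation*}
The natural way to estimate the right-hand side is to distribute the factor $\eps^{-\gamma}$ symmetrically via Cauchy--Schwarz,
\begin{equation*}
\Bigl|\frac{1}{\eps^\gamma}\int_{\theta_1^\eps} f_1^\eps\,v^\eps \,d\bar X\Bigr|
\;\leq\; \Bigl(\frac{1}{\eps^\gamma}\int_{\theta_1^\eps} |f_1^\eps|^2\,d\bar X\Bigr)^{1/2}
\Bigl(\frac{1}{\eps^\gamma}\int_{\theta_1^\eps} |v^\eps|^2\,d\bar X\Bigr)^{1/2},
\end{equation*}
so that the two ``dangerous'' factors of $\eps$ can be controlled separately by the hypothesis on $f^\eps$ and by the concentration lemma.

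For the first factor I would use the change-of-variables identity \eqref{a priori f_1}, which gives $\|f_1^\eps\|_{L^2(\theta_1^\eps)}=\|f^\eps\|_{L^2(\theta^\eps)}$; combined with \eqref{est f} (and the definition of the triple-bar norm) this yields $\eps^{-\gamma}\|f_1^\eps\|_{L^2(\theta_1^\eps)}^2 \leq c\,\eps$. For the second factor I would invoke \eqref{est vesp2} with $p=q=2$, obtaining
\begin{equation*}
\frac{1}{\eps^\gamma}\int_{\theta_1^\eps}|v^\eps|^2\,d\bar X \;\leq\; C\,\|v^\eps\|_{H^1(R_a^\eps)}^2,
\end{equation*}
with $C$ independent of $\eps$. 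Plugging these two bounds into the energy identity above gives
\begin{equation*}
\|v^\eps\|_{H^1(R_a^\eps)}^2 \;\leq\; C'\,\sqrt{\eps}\;\|v^\eps\|_{H^1(R_a^\eps)},
\end{equation*}
hence $\|v^\eps\|_{H^1(R_a^\eps)}\leq C'\sqrt{\eps}$, and dividing by $\sqrt{\eps}$ (which is exactly the definition of the rescaled norm) produces the stated uniform bound $|||v^\eps|||_{H^1(R_a^\eps)}\leq c_0$.

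The main obstacle, if any, is bookkeeping rather than mathematical: one must check that the exponent from the concentration estimate \eqref{est vesp2} matches the exponent coming from \eqref{est f} so that the extra $\sqrt{\eps}$ appears on the right-hand side (and not its reciprocal). This is also where hypothesis \eqref{est f} is tailored: its $\eps^{-\gamma}$ scaling is precisely what is needed for the product of the two Cauchy--Schwarz factors to be comparable to $\sqrt{\eps}\,|||v^\eps|||_{H^1(R_a^\eps)}$ after rescaling, so that no assumption on $\gamma$ beyond positivity is required.
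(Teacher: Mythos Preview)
Your proof is correct and follows essentially the same approach as the paper: test the weak formulation with $v^\eps$, split the right-hand side by Cauchy--Schwarz with the factor $\eps^{-\gamma}$ distributed symmetrically, control the $f_1^\eps$ factor via \eqref{a priori f_1} and \eqref{est f}, and control the $v^\eps$ factor via Lemma~\ref{conc int}. The only cosmetic difference is that the paper divides by $\eps$ at the outset and works with the triple-bar norms throughout, whereas you carry the ordinary norms and rescale by $\sqrt{\eps}$ at the end; the bookkeeping is equivalent.
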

\begin{proof}
Taking the test function $v^\eps$ in the variational formulation of (\ref{OPI01 appen}) we have   
\begin{align*}|||v^\eps|||^2_{H^1(R^\eps_a)}  &= \frac{1}{\eps}\|v^\eps\|^2_{H^1(R^\eps_a)} \leq \Big(\frac{1}{\eps^{\gamma + 1}}\int_{\theta^\eps_1 }|f^\eps_1(\bar x, \bar y)|^2d\bar X\Big)^\frac{1}{2}\Big(\frac{1}{\eps^{\gamma + 1}}\int_{\theta^\eps_1 }|v^\eps(\bar x,\bar y)|^2 d\bar X\Big)^\frac{1}{2}\\
&\leq C\Big(\frac{1}{\eps^\gamma}|||f_1^\eps|||_{L^2(\theta^\eps_1)}^2\Big)^\frac{1}{2}|||v^\eps|||_{H^1(R^\eps_a)}
\end{align*}
using (\ref{a priori f_1}), (\ref{est f}) and Lemma \ref{conc int} we obtain
$$|||v^\eps|||_{H^1(R^\eps_a)}\leq c_0,$$ where $c_0$ is a positive constant independent of $\eps>0$.
\end{proof}

\par\medskip

The next result discusses the relationship between the solutions of problems \eqref{1OPI0} and \eqref{OPI01 appen}.  

\begin{lemma}\label{1 transform}
Let $w^\eps$ and $v^\eps$ be the solutions of problems \eqref{1OPI0} and \eqref{OPI01 appen} respectively, assuming (\ref{est f}), we have
\begin{equation*}\label{eq-Lemma}
|||w^\eps\circ L^\eps - v^\eps|||_{H^1(R_a^\epsilon)} \eto 0.
\end{equation*}

\end{lemma}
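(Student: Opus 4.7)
The plan is to transport problem \eqref{1OPI0} onto $R_a^\eps$ via the diffeomorphism $L^\eps$ and to show that the resulting equation differs from \eqref{OPI01 appen} only by terms of order $\eta(\eps)$, so that a standard energy estimate closes the argument. Set $u^\eps:=w^\eps\circ L^\eps \in H^1(R_a^\eps)$. Since $L^\eps(\bar x,\bar y)=(\bar x,\bar y-\eps k^1_\eps(\bar x))$ has unit Jacobian determinant, the chain rule gives $\partial_y w^\eps = \partial_{\bar y} u^\eps$ and $\partial_x w^\eps=\partial_{\bar x} u^\eps + \eps(k^1_\eps)'(\bar x)\,\partial_{\bar y} u^\eps$ at matching points. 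Substituting into the variational formulation of \eqref{1OPI0} with test function $\varphi=\psi\circ (L^\eps)^{-1}$ for $\psi\in H^1(R_a^\eps)$, I would rewrite the identity as
\begin{equation*}
\int_{R_a^\eps}\bigl[\partial_{\bar x}u^\eps\,\partial_{\bar x}\psi+\partial_{\bar y}u^\eps\,\partial_{\bar y}\psi+u^\eps\psi\bigr]d\bar X\,+\,\mathcal{E}^\eps(u^\eps,\psi)\,=\,\frac{1}{\eps^\gamma}\int_{\theta_1^\eps} f_1^\eps\,\psi\,d\bar X,
\end{equation*}
where $\mathcal{E}^\eps$ collects the two cross terms $\eps(k^1_\eps)'(\partial_{\bar x}u^\eps\partial_{\bar y}\psi+\partial_{\bar y}u^\eps\partial_{\bar x}\psi)$ together with the quadratic contribution $\eps^2((k^1_\eps)')^2\partial_{\bar y}u^\eps\partial_{\bar y}\psi$. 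The right-hand side above coincides with the right-hand side of the variational formulation of \eqref{OPI01 appen}, because of the invariance of volume and the definition \eqref{def-f1eps}.

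Subtracting the variational equation for $v^\eps$ from the one for $u^\eps$ and testing against $\psi=u^\eps-v^\eps$, the principal bilinear form on the left yields $\|u^\eps-v^\eps\|_{H^1(R_a^\eps)}^2$ while the remaining error equals $-\mathcal{E}^\eps(u^\eps,u^\eps-v^\eps)$. Using the pointwise bound $|\eps(k^1_\eps)'|\le \eta(\eps)$ from \eqref{def-eta} together with Cauchy--Schwarz, I expect the estimate
\begin{equation*}
\|u^\eps-v^\eps\|_{H^1(R_a^\eps)}\,\le\,C\,\eta(\eps)\,\|u^\eps\|_{H^1(R_a^\eps)}.
\end{equation*}
Dividing by $\eps^{1/2}$ gives the analogous bound in the $|||\cdot|||$-norm. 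The triangle inequality $|||u^\eps|||\le |||u^\eps-v^\eps|||+|||v^\eps|||$ combined with the uniform bound $|||v^\eps|||_{H^1(R_a^\eps)}\le c_0$ from Lemma \ref{est v} lets me absorb the $u^\eps-v^\eps$ contribution back into the left-hand side once $\eps$ is small enough that $C\eta(\eps)<1/2$. I then conclude $|||u^\eps-v^\eps|||_{H^1(R_a^\eps)}\le C'\eta(\eps)$, which tends to zero by \textbf{(H.1)}.

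The only subtlety I anticipate is the mild circularity in estimating $\|u^\eps\|$: since the a priori bound of Lemma \ref{est v} is stated for $v^\eps$ and not for $w^\eps$ directly, the loop must be closed via the triangle inequality as above, exploiting that $\eta(\eps)$ can be made arbitrarily small. Verifying that the quadratic perturbation $\eps^2((k^1_\eps)')^2$ does not affect the coercivity of the left-hand side is immediate, since it is dominated by $\eta(\eps)^2\to 0$. Everything else reduces to careful bookkeeping in the change of variables.
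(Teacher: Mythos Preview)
Your proposal is correct and follows essentially the same approach as the paper's proof: transport \eqref{1OPI0} to $R_a^\eps$ via $L^\eps$, subtract the variational formulation of \eqref{OPI01 appen}, test with the difference, bound the perturbation terms by $C\eta^1(\eps)\|\nabla(w^\eps\circ L^\eps)\|\|\nabla(w^\eps\circ L^\eps-v^\eps)\|$, and close the loop via the triangle inequality and Lemma~\ref{est v}. The only cosmetic difference is that the paper tracks the sharper factor $\eta^1(\eps)$ (only $k^1_\eps$ enters $L^\eps$) instead of $\eta(\eps)$, which does not affect the conclusion.
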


\begin{proof}
From the definition of $L^\eps$ we have
\begin{align*}
& \frac{\partial (w^\epsilon\circ L^\eps)}{\partial \bar x} =  \frac{\partial w^\epsilon}{\partial x}  - \eps\Big(\frac{d }{dx}k^1_\eps(x)\Big) \frac{\partial w^\epsilon}{\partial y},\\
& \frac{\partial (w^\epsilon\circ L^\eps)}{\partial \bar y} =  \frac{\partial w^\epsilon}{\partial y}.
 \end{align*}
In the new system of variables ($x=\bar x$ and $\bar y = y \,+\, \eps \,k^1_\eps(x)$) the variational formulation of \eqref{1OPI0} is given by
\begin{equation}\label{VFP0 appen}
\begin{split}
&\int_{R_a^\epsilon} \Big\{ \frac{\partial (w^\epsilon\circ L^\eps)}{\partial x} \frac{\partial \varphi}{\partial x} + \frac{\partial(w^\epsilon\circ L^\eps)}{\partial \bar y}\frac{\partial \varphi}{\partial \bar y} + (w^\epsilon\circ L^\eps)\varphi \Big\} d \bar X\\
&\quad +\int_{R_a^\epsilon}  \Big\{\eps \Big(\frac{d }{dx}k^1_\eps(x)\Big)\Big(\frac{\partial(w^\epsilon \circ L^\eps)}{\partial \bar y} \frac{\partial \varphi}{\partial x} + \frac{\partial (w^\epsilon\circ L^\eps)}{\partial x}\frac{\partial \varphi}{\partial \bar y}\Big)\Big\}d \bar X\\
& \quad+ \int_{R_a^\epsilon} \Big\{  \Big(\eps\frac{d}{dx}k^1_\eps(x)\Big)^2\frac{\partial(w^\epsilon \circ L^\eps)}{\partial \bar y} \frac{\partial \varphi}{\partial \bar y} \Big\}  d \bar X\\
&\quad = \frac{1}{\eps^\gamma}\int_{\theta^\eps_1} f^\eps_1 \varphi\, d \bar X, 
\quad \forall \varphi \in H^1(R_a^\epsilon).
\end{split}
\end{equation}
On the other hand, the weak formulation of \eqref{OPI01 appen} is: find $v^\epsilon \in H^1(R_a^\epsilon)$ such that 
\begin{equation} \label{VFP1 appen}
\int_{R_a^\epsilon} \Big\{ \frac{\partial v^\epsilon}{\partial x} \frac{\partial \varphi}{\partial x} 
+ \frac{\partial v^\epsilon}{\partial \bar y} \frac{\partial \varphi}{\partial \bar y}
+ v^\epsilon \varphi \Big\} \, d\bar X = \frac{1}{\eps^\gamma}\int_{\theta^\epsilon_1} f^\eps_1 \varphi \, d\bar X, 
\quad \forall \varphi \in H^1(R_a^\epsilon).
\end{equation}
Therefore, subtracting \eqref{VFP1 appen} from \eqref{VFP0 appen}, taking $(w^\epsilon\circ L^\eps - v^\eps)$ as a test function and after some computations and simplifications, we obtain

\begin{align*}
\|w^\eps\circ L^\eps-v^\eps\|_{H^1(R_a^\eps)}^2
\leq c\,\eta^1(\eps)\|\nabla (w^\eps\circ L^\eps)\|_{L^2(R_a^\eps)}\|\nabla (w^\eps\circ L^\eps-v^\eps)\|_{L^2(R_a^\eps)},
\end{align*}
where $c>0$ is a constant independent of $\eps$ and $\eta^1(\eps)$ is given by (\ref{def-eta}).

This implies 
\begin{align*}
\|w^\eps\circ L^\eps-v^\eps\|_{H^1(R_a^\eps)}
&\leq c\,\eta^1(\eps)\|w^\eps\circ L^\eps\|_{H^1(R_a^\eps)}\\
&\leq c\,\eta^1(\eps)\|w^\eps\circ L^\eps-v^\eps\|_{H^1(R_a^\eps)}+ c\,\eta^1(\eps)\|v^\eps\|_{H^1(R_a^\eps)},
\end{align*}
and therefore using Lemma \ref{est v} since $v_\eps$ is the solution of \eqref{OPI01 appen} we have
\begin{align*}
|||w^\eps\circ L^\eps-v^\eps|||_{H^1(R_a^\eps)}
\leq \frac{c\,\eta^1(\eps)}{1-c\,\eta^1(\eps)}|||v^\eps|||_{H^1(R_a^\eps)}\leq \frac{c\,\eta^1(\eps)}{1-c\,\eta^1(\eps)}c_0,
\end{align*}
the result follows from the definition of $\eta^1(\eps)$. 
\end{proof}


\begin{remark}
Notice that from the point of view of the limit behavior of the solutions it is the same to study problem \eqref{1OPI0} defined in the doubly oscillating thin domain as to analyze problem \eqref{OPI01 appen} posed in a thin domain with just one oscillating boundary. It is important to note that this is true because of {\bf(H.1)}. If that assumption is not satisfied, at least for $k_\eps^1$, then the simplification it is not possible. For instance, if we have a domain with oscillations with the same period in both boundaries like this one
$$R^\epsilon = \Big\{ (x, y) \in \R^2 \; | \;  x \in (0,1),  \; -\eps(2 - g(x/\eps)) < y < \epsilon g(x/\eps)  \Big\},$$
where  $g: \R \to \R $ is a smooth  $L-$periodic function.
Observe that in this particular case we have
$$k^1_\eps(x)=2 - g(x/\eps), \quad k^2_\eps(x)=g(x/\eps).$$
Therefore, it follows straightforward that condition \eqref{main} is not satisfied.
Observe that the original problem \eqref{1OPI0} for this particular thin domain is in the framework of the classical periodic homogenization while the converted problem \eqref{OPI01 appen} is posed in a rectangle of height $\eps$ where homogenization theory is not necessary to analyze the behavior of the solutions. Indeed, if $k_\eps^1$ does not satisfy \eqref{main} the solutions of problems \eqref{1OPI0} and \eqref{OPI01 appen} are not comparable in general.
 \end{remark}
 
 Now we define a transformation on the thin domain $R^\eps_a$, which will map $R^\eps_a$ into the fixed rectangle
$Q=I \times (0,1)$. This transformation is given by
\begin{equation*}
\begin{array}{rl}
S^\eps: Q &\longrightarrow R_a^\epsilon \\
 (x, y)&\longrightarrow (\bar x,  \bar y) := (x\,, \,y\,\eps K_\eps(x)).
 \end{array}
 \end{equation*}
We recall that $K_\eps(x)=  k_\eps^2(x) + k_\eps^1(x)$.  

Using the chain rule and standard computations it is not difficult to see that  there exist $c,C>0$  such that if $v^\eps\in H^1(R_a^\eps)$ and $u^\eps=v^\eps \circ S^\eps\in H^1(Q)$ then  the following estimates hold

\begin{equation}\label{comparison-L2}
c|||v^\eps|||^2_{L^2(R^\eps_a)}\leq \|u^\eps\|^2_{L^2(Q)}\leq C|||v^\eps|||^2_{L^2(R^\eps_a)},
\end{equation}

\begin{equation}\label{comparison-Dy}
c\Big|\Big|\Big|\frac{\partial v^\eps}{\partial \bar y}\Big|\Big|\Big|^2_{L^2(R^\eps_a)}\leq \frac{1}{\eps^2}\Big\|\frac{\partial u^\eps}{\partial  y}\Big\|^2_{L^2(Q)}\leq C\Big|\Big|\Big|\frac{\partial v^\eps}{\partial \bar y}\Big|\Big|\Big|^2_{L^2(R^\eps_a)},
\end{equation}

\begin{equation}\label{comparison-H1}
c|||\nabla v^\eps|||^2_{L^2(R^\eps_a)}\leq \|\nabla u^\eps\|^2_{L^2(Q)}\leq C|||\nabla v^\eps|||^2_{L^2(R^\eps_a)}.
\end{equation}

\par\medskip

Now, under this change of variables and defining $u^\eps=v^\eps\circ S^\eps$ where $v^\eps$ satisfies \eqref{OPI01 appen} and $f^\eps_2= f^\eps_1 \circ S^\eps$, where $f_1^\eps$ is defined in \eqref{def-f1eps}, problem \eqref{OPI01 appen} becomes
\begin{equation}\label{transformed-problem}
\left\{
\begin{gathered}
- \frac{1}{K_\eps}\hbox{div}\big(B^\eps(u^\eps)\big)  + u^\epsilon = \frac{1}{\eps^\gamma}\chi_{\theta^\epsilon_2}f_2^\eps
\quad \textrm{ in } Q, \\
B(u^\eps)\cdot \eta  = 0
\quad \textrm{ on } \partial Q,\\
u^\eps=v^\eps \circ S^\eps \textrm{ in } Q,
\end{gathered}
\right. 
\end{equation}
where the function $\chi_{\theta^\epsilon_2}: \R^2 \rightarrow \R$ is the characteristic function of the narrow strip $\theta^\eps_2$ given by

\begin{equation}\label{theta2}
   \theta^\epsilon_2 = \Big\{ (x, y) \in \R^2 \; | \;  x \in I ,  \; 1  - \epsilon^\gamma \frac{H_\eps(x)}{K_\eps(x)}< y < 1 \Big\}.
\end{equation}
The vector $\eta$ denotes the outward unit normal vector field to $\partial Q$ and 
\begin{equation*}
B^\eps(u^\eps) = \Big( K_\eps \frac{ \partial u^\eps}{\partial x} - y \displaystyle\frac{dK_\eps}{dx}\frac{ \partial u^\eps}{\partial y}\,,\, - y \displaystyle\frac{dK_\eps}{dx}\frac{ \partial u^\eps}{\partial x} + \Big(\frac{y^2}{ K_\eps}\Big(\displaystyle\frac{dK_\eps}{dx}\Big)^2 +
\frac{1}{\eps^2 K_\eps}\Big)\frac{ \partial u^\eps}{\partial y}\Big).
\end{equation*}

Notice that 
\begin{equation}\label{a priori f_2}
c|||f^\eps_1|||^2_{L^2(\theta^\eps_1)}\leq \|f^\eps_2\|^2_{L^2(\theta_2^\eps)}\leq C|||f^\eps_1|||^2_{L^2(\theta^\eps_1)}.  
\end{equation}
 Therefore from (\ref{a priori f_1}) and (\ref{a priori f_2}) we have
\begin{equation}\label{a priori f}
c|||f^\eps|||^2_{L^2(\theta^\eps)}\leq \|f^\eps_2\|^2_{L^2(\theta_2^\eps)}\leq C|||f^\eps|||^2_{L^2(\theta^\eps)}.
\end{equation}

In the new system of coordinates we obtain a domain which is neither thin nor oscillating anymore. In some sense, we have rescaled the neighborhood (\ref{theta1}) into the strip $\theta_2^\eps \subset Q$ and substituted the thin domain $R_a^\eps$ for a domain $Q$ independent on $\eps$, at a cost of replacing the oscillating thin domain by
oscillating coefficients in the differential operator.

In order to analyze the limit behavior of the solutions of  \eqref{transformed-problem} we establish the relation to the solutions of the following easier problem
\begin{equation}\label{transformed-problem1}
\left\{
\begin{gathered}
- \frac{1}{K_\eps}\Big[\frac{\partial}{\partial x}\Big(K_\eps \frac{\partial w_1^\eps}{\partial x}\Big) + \frac{1}{\eps^2 K_\eps} \frac{\partial^2w_1^\eps}{\partial y^2}\Big]  + w_1^\epsilon = \frac{1}{\eps^\gamma}\chi_{\theta^\epsilon_2}f_2^\eps
\quad \textrm{ in } Q, \\
K_\eps \frac{\partial w_1^\eps}{\partial x}\, \eta_1 + \frac{1}{\eps^2 K_\eps} \frac{\partial w_1^\eps}{\partial y}\, \eta_2 = 0  \quad \textrm{ on } \partial Q,
\end{gathered}
\right. 
\end{equation}
where $\eta = (\eta_1, \eta_2)$ is the outward unit normal to $\partial Q$.

Before obtaining a priori estimates for the solutions of (\ref{transformed-problem1}), it is necessary to prove the following result

\begin{lemma}\label{conc int w}
Let $Q=I \times (0,1)$ and $\theta_2^\eps$ like defined in (\ref{theta2}), suppose that $w_1^\eps \in H^s(Q)$ with $\frac{1}{2} <s \leq \ 1$ and $s- 1 \geq - \frac{1}{q}$. Then, for small $\eps_0 >0$, there exist a constant $C > 0$ independent of $\eps$ and $w_1^\eps$, such that for any $0 < \eps \leq \eps_0$, we have 
$$\frac{1}{\eps^\gamma}\int_{\theta_2^\eps}|w_1^\eps|^q dxdy \leq C\|w_1^\eps\|^q_{H^s(Q)}.$$
\end{lemma}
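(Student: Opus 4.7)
My plan is to combine Fubini's theorem with a trace argument in $y$ followed by a one-dimensional Sobolev embedding on horizontal slices. The essential geometric fact is that the strip $\theta_2^\eps$ has vertical thickness at most $C\eps^\gamma$: indeed, by hypotheses \eqref{est H} and \eqref{est K}, the function $\delta(x):=\eps^\gamma H_\eps(x)/K_\eps(x)$ satisfies $\delta(x)\leq (H_1/K_0)\,\eps^\gamma=:C_*\eps^\gamma$ for every $x\in I$. The factor $\eps^\gamma$ appearing in the denominator of the statement should be paid for by this vertical width, and the remaining integrand ought to be controlled by the $H^s(Q)$ norm uniformly.

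First I would apply Fubini and enlarge the strip to a full rectangle:
\begin{equation*}
\int_{\theta_2^\eps}|w_1^\eps|^q\,dx\,dy=\int_I\int_{1-\delta(x)}^{1}|w_1^\eps(x,y)|^q\,dy\,dx\leq\int_{1-C_*\eps^\gamma}^{1}\Big(\int_I|w_1^\eps(x,y)|^q\,dx\Big)\,dy.
\end{equation*}
The task then reduces to bounding the inner integral $\|w_1^\eps(\cdot,y)\|_{L^q(I)}^q$ by $\|w_1^\eps\|_{H^s(Q)}^q$ uniformly in $y$ close to $1$. For this I would use the trace theorem: since $s>1/2$, for every $y_0\in[0,1]$ the slice $w_1^\eps(\cdot,y_0)$ belongs to $H^{s-1/2}(I)$ with
\begin{equation*}
\|w_1^\eps(\cdot,y_0)\|_{H^{s-1/2}(I)}\leq C\,\|w_1^\eps\|_{H^s(Q)},
\end{equation*}
where $C$ does not depend on $y_0$. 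To guarantee this uniformity I would first extend $w_1^\eps$ by a Stein-type extension to a neighborhood $I\times(-\kappa,1+\kappa)$ of $\bar Q$ and then apply the standard trace inequality on each horizontal line; the extension constant depends only on $Q$.

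Next, the one-dimensional Sobolev embedding $H^{s-1/2}(I)\hookrightarrow L^q(I)$ gives $\|w_1^\eps(\cdot,y)\|_{L^q(I)}\leq C\|w_1^\eps(\cdot,y)\|_{H^{s-1/2}(I)}$ precisely under the assumption $s-1\geq -1/q$, which is the threshold $q(1-s)\leq 1$ appearing in the hypothesis. Combining the two embeddings yields, for every $y\in[1-C_*\eps^\gamma,1]$,
\begin{equation*}
\int_I |w_1^\eps(x,y)|^q\,dx\leq C\,\|w_1^\eps\|_{H^s(Q)}^q.
\end{equation*}
Plugging this bound into the double integral above and performing the trivial integration in $y$ over an interval of length $C_*\eps^\gamma$, the factor $\eps^\gamma$ is absorbed by the prefactor $1/\eps^\gamma$ in the claim, and the desired estimate follows.

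The main technical point I expect some care with is the uniformity of the trace constant in $y_0$; this is the reason for invoking the extension step, after which uniformity becomes automatic because horizontal translations commute with the trace on $\R$. A subsidiary check is that the borderline cases $s=1$ and $q=1/(1-s)$ are genuinely covered by the stated Sobolev embedding on $I$, which they are, since $I$ is one-dimensional and the embedding holds up to (and including) the critical exponent when $s-1/2<1/2$.
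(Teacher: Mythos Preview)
Your argument is correct and essentially self-contained, whereas the paper's proof proceeds differently: it first performs the reflection $(x,y)\mapsto(x,1-y)$ to bring the strip near $y=0$, then enlarges the oscillating strip to the non-oscillating rectangle $r_\eps=I\times(0,\eps^\gamma H_1/K_0)$, and finally invokes \cite[Lemma~2.1]{AAA} (a ready-made concentrating-integral estimate for straight strips) together with the fact that the reflection is an isometry of $H^s$. In other words, the paper outsources the analytic content to an external lemma, while you reprove that content directly via the trace theorem $H^s(Q)\to H^{s-1/2}(I)$ (uniformly in the height of the slice, secured by a Stein extension) and the one-dimensional Sobolev embedding $H^{s-1/2}(I)\hookrightarrow L^q(I)$, which is exactly where the hypothesis $s-1\ge -1/q$ enters. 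Both routes share the preliminary enlargement of $\theta_2^\eps$ to a rectangle of height $\sim\eps^\gamma$, which is what produces the factor cancelling $\eps^{-\gamma}$; your approach has the advantage of making transparent why the exponent condition is needed, while the paper's is shorter because it cites a known result.
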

\begin{proof}
First we observe that 
$$\frac{1}{\eps^\gamma}\int_{\theta_2^\eps}|w_1^\eps(x,y)|^q dxdy = \frac{1}{\eps^\gamma}\int_I\int_0^{\eps^\gamma\frac{H_\eps(x)}{K_\eps(x)}}|w_1^\eps(x,1-y)|^q dydx \leq \int_{r_\eps}|w_1^\eps(x,1 -y)|^q dxdy,$$
where $r_\eps$ is the strip without oscillatory behavior given by 
$$r_\eps=\Big\{(x,y) \in \R^2 \,|\, x \in I , 0 < y < \eps^\gamma \frac{H_1}{K_0} \Big\}.$$
Using \cite[Lemma 2.1]{AAA} we have that there exists $\eps_0>0$ and $C>0$ independent of $\eps$ and $v^\eps=w^\eps_1 \circ \tau$ such that 
$$\frac{1}{\eps^\gamma}\int_{\theta_2^\eps}|w_1^\eps(x,y)|^q dxdy \leq \|v^\eps\|^q_{H^s(\tau^{-1}(Q))}, \,\,\,\forall \eps \in (0,\eps_0),$$
where we are taking $\tau : \R^2 \rightarrow \R^2$ given by $\tau(x,y)=(x,1-y)$. From \cite[Section 2]{AAA} we have that the norms $\|v^\eps\|_{H^s(\tau^{-1}(Q))}$ and $\|w_1^\eps\|_{H^s(Q)}$ are equivalents, so we can conclude the proof.
\end{proof}

Observe that assuming (\ref{est f}), estimates (\ref{a priori f}), under the assumptions on the functions $k^1_\eps$ and $k^2_\eps$, Lemma \ref{conc int w}, so the equation \eqref{transformed-problem1} admits a unique solution $w_1^\eps \in H^1(Q)$, which satisfies the priori estimates
\begin{equation} \label{EST1 appen}
\begin{gathered}
\| w_1^\epsilon \|_{L^2(Q)}, \, \, \, \Big\| \frac{\partial w_1^\epsilon}{\partial x} \Big\|_{L^2(Q)}, \, \, \, \frac{1}{\epsilon} \Big\| \frac{\partial w_1^\epsilon}{\partial y} \Big\|_{L^2(Q)} 
\le C, 
\end{gathered}
\end{equation}
where the positive constant $C$ is independet of $\eps>0$.

\begin{lemma}\label{2 transform}
Let $u^\eps$ and $w_1^\eps$ be the solution of problems \eqref{transformed-problem} and \eqref{transformed-problem1} respectively and suppose that $f^\eps$ satisfying (\ref{est f}). Then, we have
\begin{align*}
&\Big|\Big|\frac{\partial(u^\eps - w_1^\eps)}{\partial x}\Big|\Big|_{L^2(Q)}^2 + \frac{1}{\eps^2}\Big|\Big|\frac{\partial(u^\eps - w_1^\eps)}{\partial y}\Big|\Big|_{L^2(Q)}^2 +||u^\eps - w_1^\eps||_{L^2(Q)}^2 \eto 0.
 \end{align*}
\end{lemma}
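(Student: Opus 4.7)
I would subtract the weak formulations of \eqref{transformed-problem} and \eqref{transformed-problem1}, test against $u^\eps - w_1^\eps$, and exploit that every term obstructing coercivity carries a factor $\frac{dK_\eps}{dx}$, which by hypothesis \textbf{(H.1)} satisfies $\eps|\frac{dK_\eps}{dx}| \le \eta(\eps) \to 0$.

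\textbf{Preliminary a priori bounds.} The estimates \eqref{EST1 appen} that are stated for $w_1^\eps$ also hold for $u^\eps$: by Lemma \ref{est v} the rescaled solution $v^\eps = u^\eps \circ (S^\eps)^{-1}$ satisfies $|||v^\eps|||_{H^1(R_a^\eps)} \le c_0$, and the comparison inequalities \eqref{comparison-L2}--\eqref{comparison-H1} then yield $\|u^\eps\|_{L^2(Q)},\ \|\partial_x u^\eps\|_{L^2(Q)},\ \frac{1}{\eps}\|\partial_y u^\eps\|_{L^2(Q)} \le C$ uniformly in $\eps$.

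\textbf{Energy identity.} Multiplying each equation by $K_\eps \varphi$, integrating by parts, and using the respective Neumann conditions ($B^\eps(u^\eps)\cdot \eta = 0$ on $\partial Q$ and the analogous condition in \eqref{transformed-problem1}) produces weak formulations whose concentrated sources $\frac{K_\eps}{\eps^\gamma}\chi_{\theta^\eps_2} f_2^\eps$ coincide and cancel on subtraction. Writing $B^\eps = B^\eps_{\mathrm{diag}} + B^\eps_{\mathrm{off}}$ where $B^\eps_{\mathrm{diag}}(\phi) = \bigl(K_\eps \partial_x \phi,\ \tfrac{1}{\eps^2 K_\eps} \partial_y \phi\bigr)$ matches the operator of \eqref{transformed-problem1}, and testing the difference with $u^\eps - w_1^\eps$, I obtain
\begin{equation*}
\int_Q K_\eps |\partial_x(u^\eps - w_1^\eps)|^2 + \frac{1}{\eps^2 K_\eps}|\partial_y(u^\eps - w_1^\eps)|^2 + K_\eps (u^\eps - w_1^\eps)^2 \,dxdy = -\int_Q B^\eps_{\mathrm{off}}(u^\eps)\cdot \nabla(u^\eps - w_1^\eps)\,dxdy.
\end{equation*}

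\textbf{Bounding the remainder and main obstacle.} Each component of $B^\eps_{\mathrm{off}}(u^\eps)$ involves either $y \frac{dK_\eps}{dx}$ or $y^2 \bigl(\frac{dK_\eps}{dx}\bigr)^2$. Writing $\frac{dK_\eps}{dx} = \frac{1}{\eps}\bigl(\eps \frac{dK_\eps}{dx}\bigr)$ and pulling out the $L^\infty$ factor $\|\eps \frac{dK_\eps}{dx}\|_{L^\infty} \le \eta(\eps)$, the cross integrals reorganize into products of the uniformly bounded quantities $\|\partial_x u^\eps\|_{L^2}$ or $\frac{1}{\eps}\|\partial_y u^\eps\|_{L^2}$ with derivatives of $u^\eps - w_1^\eps$. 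Cauchy--Schwarz followed by Young's inequality absorbs the derivatives of the difference into the coercive left-hand side and leaves a residual of order $\eta(\eps)^2 \to 0$. Since $K_0 \le K_\eps \le K_1$ by \eqref{est K}, all three norms in the statement are controlled simultaneously. The only delicate point is the bookkeeping for the term containing $\bigl(\frac{dK_\eps}{dx}\bigr)^2$, which naturally carries $\eps^{-2}$: it must be paired with both $\frac{1}{\eps}\partial_y u^\eps$ and $\frac{1}{\eps}\partial_y(u^\eps - w_1^\eps)$ so that the absorption into the coercive term $\frac{1}{\eps^2 K_\eps}|\partial_y(u^\eps - w_1^\eps)|^2$ yields an $\eps$-independent constant; once this pairing is organized correctly the conclusion follows immediately.
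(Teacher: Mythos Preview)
Your proposal is correct and follows essentially the same route as the paper: subtract the two weak formulations, test with $u^\eps-w_1^\eps$, observe that the concentrated source terms cancel, and use that every surviving term on the right carries at least one factor $\frac{dK_\eps}{dx}$ so that $\eps|\frac{dK_\eps}{dx}|\le\eta(\eps)\to0$ controls everything once the a priori bounds on $u^\eps$ (via Lemma~\ref{est v} and \eqref{comparison-L2}--\eqref{comparison-H1}) and on $w_1^\eps$ (via \eqref{EST1 appen}) are in hand. The only cosmetic difference is that the paper splits $\partial(u^\eps-w_1^\eps)$ by the triangle inequality and bounds each piece directly, whereas you absorb the difference into the coercive left-hand side via Young's inequality; both finish the argument in one line.
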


\begin{proof}
Subtracting the weak formulation of \eqref{transformed-problem1} from the weak formulation of \eqref{transformed-problem} and choosing $u^\eps-w_1^\eps $ as test function we get
\begin{equation*}\label{21 transform}
\begin{split}
\int_{Q}&\Big\{ K_\eps\Big(\frac{\partial (u^\epsilon - w_1^\eps)}{\partial x}\Big)^2 + \frac{1}{\eps^2 K_\eps}\Big(\frac{\partial (u^\epsilon - w_1^\eps)}{\partial y}\Big)^2 + 
K_\eps\big(u^\epsilon - w_1^\eps\big)^2\Big\} dx dy\\
=& \int_{Q} \Big\{ y\frac{\partial u^\eps}{\partial y}\displaystyle\frac{dK_\eps}{dx}\frac{\partial (u^\eps-w^\eps_1 )}{\partial x} + y\displaystyle\frac{dK_\eps}{dx}\frac{\partial u^\eps}{\partial x}\frac{\partial (u^\eps-w_1^\eps )}{\partial y} - \frac{y^2}{K_\eps}\Big(\displaystyle\frac{dK_\eps}{dx}\Big)^2\frac{\partial u^\eps}{\partial y}\frac{\partial (u^\eps-w_1^\eps )}{\partial y}\Big\}dxdy\\
\leq& \,C\eta(\eps)\int_{Q} \Big\{ \frac{1}{\eps}\Big|\frac{\partial u^\eps}{\partial y}\Big| \Big(\Big|\frac{\partial u^\eps}{\partial x}\Big| + \Big|\frac{\partial w^\eps_1}{\partial x}\Big|\Big)+  \Big|\frac{\partial u^\eps}{\partial x}\Big|\frac{1}{\eps}\Big(\Big|\frac{\partial u^\eps }{\partial y}\Big| + \Big|\frac{\partial w_1^\eps }{\partial y}\Big| \Big)\Big\}dxdy\\
 +&\, C_0 (\eta(\eps))^2 \int_{Q}\Big(\frac{1}{\eps^2}\Big|\frac{\partial u^\eps}{\partial y}\Big|^2 + \frac{1}{\eps}\Big|\frac{\partial u^\eps}{\partial y}\Big|\frac{1}{\eps}\Big|\frac{\partial w_1^\eps}{\partial y}\Big|\Big)dxdy ,
\end{split}
\end{equation*}
where $C$ and $C_0$ are positive constants which does not depend on $\eps$.
Taking into account that  $\eps \Big|\displaystyle\frac{dK_\eps}{dx}\Big| \leq \eta(\eps)$, convergence (\ref{main}), estimates \eqref{comparison-Dy} and\eqref{comparison-H1} applied to $u^\eps$ and $v^\eps$, Lemma \ref{est v}, the priori estimate of $w_1^\eps$ (see  \eqref{EST1 appen}) and following standard computations 
we obtain the result.
\end{proof}

Finally, we compare the behavior of the solution of \eqref{transformed-problem1} to the solutions of the following problem posed in $I \subset \R$
\begin{equation}\label{transformed-problem3}
\left\{
\begin{gathered}
- \frac{1}{K_\eps}\frac{\partial}{\partial x}\Big(K_\eps \frac{\partial u_1^\eps}{\partial x}\Big)+ u_1^\epsilon = \frac{1}{\eps^\gamma} f^\eps_3
\quad \textrm{ in } I, \\
(u_1^\eps)_x(a)= (u_1^\eps)_x(b) = 0  \quad \textrm{ on } \partial I,
\end{gathered}
\right. 
\end{equation}
where $f^\eps_3(x)=\displaystyle\int_{1-\eps^\gamma\frac{H_\eps(x)}{K_\eps(x)}}^1 f_2^\eps(x,y) dy$ for a.e. $x \in I$, which is a function depending only on the $x$ variable.

Then, considering $u_1^\eps(x)$ as a function defined in $Q$ (extending it in a constant way in the $y$ direction) we prove the following lemma.
\begin{lemma}\label{3 transform}
Let $u^\eps_1$ and $w_1^\eps$ be the solution of problems \eqref{transformed-problem3} and \eqref{transformed-problem1} respectively and suppose that $f^\eps$ satisfying (\ref{est f}).  Then, we have
$$\Big|\Big|\frac{\partial(w^\eps_1 - u_1^\eps)}{\partial x}\Big|\Big|_{L^2(Q)}^2 + \frac{1}{\eps^2}\Big|\Big|\frac{\partial w_1^\eps}{\partial y}\Big|\Big|_{L^2(Q)}^2 +||w^\eps_1 - u_1^\eps||^2_{L^2(Q)} \eto 0.$$
\end{lemma}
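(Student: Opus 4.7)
The plan is to use a direct energy argument. I would subtract the weak formulations of \eqref{transformed-problem3} (viewed as a problem on $Q$ by extending $u_1^\eps$ trivially in $y$) and \eqref{transformed-problem1}, and choose $\varphi = w_1^\eps - u_1^\eps$ as test function. The natural weak formulation here is obtained by multiplying each equation by $K_\eps \varphi$ before integrating by parts, so that, using $(u_1^\eps)_y\equiv 0$, the bilinear form applied to the difference becomes
$$
\int_Q K_\eps\,|(w_1^\eps - u_1^\eps)_x|^2\,dxdy + \int_Q \frac{1}{\eps^2 K_\eps}|(w_1^\eps)_y|^2\,dxdy + \int_Q K_\eps\,(w_1^\eps - u_1^\eps)^2\,dxdy.
$$
In view of the uniform bounds \eqref{est K} on $K_\eps$, this quantity controls from below precisely the three norms appearing in the target estimate.

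The key algebraic step is the right-hand side, which after subtraction reads
$\frac{1}{\eps^\gamma}\int_Q K_\eps[\chi_{\theta_2^\eps} f_2^\eps - f_3^\eps](w_1^\eps - u_1^\eps)\,dxdy$.
By the very definition of $f_3^\eps$ in \eqref{transformed-problem3}, one has the pointwise identity $\int_0^1[\chi_{\theta_2^\eps}(x,y)f_2^\eps(x,y) - f_3^\eps(x)]dy = 0$, so that any integrand of the form $K_\eps[\chi_{\theta_2^\eps} f_2^\eps - f_3^\eps]\psi(x)$ with $\psi$ depending only on $x$ vanishes after integration over $Q$. Writing $\bar w_1^\eps(x) := \int_0^1 w_1^\eps(x,y)\,dy$ and using that both $u_1^\eps$ and $\bar w_1^\eps$ are functions of $x$ alone, this reduces the right-hand side to
$$
\frac{1}{\eps^\gamma}\int_Q K_\eps\,[\chi_{\theta_2^\eps} f_2^\eps - f_3^\eps]\,(w_1^\eps - \bar w_1^\eps)\,dxdy.
$$

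To estimate this, I would combine three ingredients. First, hypothesis \eqref{est f} together with \eqref{a priori f} gives $\|f_2^\eps\|_{L^2(\theta_2^\eps)} \leq C\eps^{\gamma/2}$, and a direct Cauchy--Schwarz argument on $f_3^\eps$ also yields $\|f_3^\eps\|_{L^2(I)} \leq C\eps^\gamma$. Second, for the factor $w_1^\eps - \bar w_1^\eps$ I would use the pointwise Poincaré-type inequality $|w_1^\eps(x,y) - \bar w_1^\eps(x)|^2 \leq \int_0^1 |(w_1^\eps)_y(x,t)|^2\,dt$, which together with $\eps^\gamma H_\eps/K_\eps \leq \eps^\gamma H_1/K_0$ and the a priori bound \eqref{EST1 appen} gives $\|w_1^\eps - \bar w_1^\eps\|_{L^2(\theta_2^\eps)} \leq C\eps^{1+\gamma/2}$. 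Putting these together via Cauchy--Schwarz produces a right-hand side bound of order $\eps$, which tends to zero and closes the argument.

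The main obstacle is precisely this cancellation trick. A naive bound of the form $\frac{1}{\eps^\gamma}\|f_2^\eps\|_{L^2(\theta_2^\eps)}\|w_1^\eps - u_1^\eps\|_{L^2(\theta_2^\eps)}$ controlled through the concentrating-integral Lemma \ref{conc int w} only yields boundedness of the energy, not its decay; the factor $\eps^\gamma$ in the measure of $\theta_2^\eps$ merely compensates the prefactor $\eps^{-\gamma}$. Exploiting the identity $\int_0^1[\chi_{\theta_2^\eps} f_2^\eps - f_3^\eps]dy = 0$ to replace $w_1^\eps - u_1^\eps$ by the much smaller function $w_1^\eps - \bar w_1^\eps$, whose $L^2$-norm on $\theta_2^\eps$ gains an extra factor $\eps$ from \eqref{EST1 appen}, is what makes the right-hand side collapse and the energy vanish as $\eps\to 0$.
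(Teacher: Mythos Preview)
Your proposal is correct and follows essentially the same energy argument as the paper: subtract the weak formulations, test with $w_1^\eps - u_1^\eps$, and exploit the cancellation $\int_0^1[\chi_{\theta_2^\eps}f_2^\eps - f_3^\eps]\,dy=0$ to replace the test function by a small $y$-oscillation of $w_1^\eps$ controlled by $\|(w_1^\eps)_y\|_{L^2(Q)}=O(\eps)$. The only cosmetic difference is that the paper subtracts $w_1^\eps(x,0)$ rather than your $\bar w_1^\eps(x)=\int_0^1 w_1^\eps\,dy$; your choice is in fact slightly cleaner, since $\int_0^1(w_1^\eps-\bar w_1^\eps)\,dy=0$ kills the $f_3^\eps$ contribution outright and makes the bound on $\|f_3^\eps\|_{L^2(I)}$ unnecessary.
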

\begin{proof}
Taking $w_1^\eps - u_1^\eps$  as test function in the variational formulation of 
\eqref{transformed-problem1} and $\int_0^1 w_1^\eps dy - u_1^\eps$ as test function in the variational formulation of \eqref{transformed-problem3}
and subtracting both weak formulations we obtain
\begin{equation} \label{31 transform}
\begin{split}
\int_{Q}&\Big\{ K_\eps\Big(\frac{\partial (w^\epsilon_1 - u_1^\eps)}{\partial x}\Big)^2 + \frac{1}{\eps^2 K_\eps}\Big(\frac{\partial w_1^\eps}{\partial y}\Big)^2 + 
K_\eps \big(w_1^\epsilon - u_1^\eps\big)^2\Big\} dx dy\\
&= \frac{1}{\eps^\gamma}\int_{Q} K_\eps \chi_{\theta^\epsilon_2}f_2^\eps(w_1^\eps - u_1^\eps) dxdy - \frac{1}{\eps^\gamma}\int_{Q}K_\eps f_3^\eps( w_1^\eps - u_1^\eps) dxdy.
\end{split}
\end{equation}
Now we analyze the two terms in the right hand-side. First, taking into account the definition of $f_3^\eps$ we have that for any function $\varphi$ defined in $I$, that is, $\varphi=\varphi(x)$ 
$$\int_{Q} K_\eps (f_3^\eps - \chi_{\theta^\epsilon_2}f_2^\eps) \varphi dxdy=  \int_{I} \varphi K_\eps f_3^\eps dx - \int_{\theta^\eps_2}\varphi K_\eps f_2^\eps dy   = 0.$$
In particular  $\displaystyle\int_{Q} K_\eps (f_3^\eps - \chi_{\theta^\epsilon_2}f_2^\eps) u_1^\eps dxdy=0$
and $\displaystyle\int_{Q} K_\eps (f_3^\eps - \chi_{\theta^\epsilon_2}f_2^\eps) w_1^\eps(x,0)dxdy=0$. Hence, using Holder inequality, (\ref{est H}), (\ref{est K}), (\ref{est f}), (\ref{a priori f}) and \eqref{EST1 appen} we get
\begin{align*}
&\frac{1}{\eps^\gamma}\Big|\int_{Q} K_\eps (\chi_{\theta^\epsilon_2}f_2^\eps - f_3^\eps) w_1^\eps dxdy \Big| = \frac{1}{\eps^\gamma}\Big|\int_{Q} K_\eps (\chi_{\theta^\epsilon_2}f_2^\eps - f_3^\eps) (w_1^\eps (x,y)- w_1^\eps(x,0) ) dxdy\Big|\\
&\leq \frac{K_1}{\eps^\gamma}\int_{Q} \Big(\Big|\chi_{\theta^\epsilon_2}f_2^\eps - f_3^\eps\Big| \Big| \int_0^y \frac{\partial w_1^\eps}{\partial s}(x,s) ds \Big|\Big) dxdy\\
&\leq \frac{K_1}{\eps^\gamma}\Big[\int_{\theta_2^\eps} |f_2^\eps| \Big| \Big(\int_0^1 \Big|\frac{\partial w_1^\eps}{\partial s}(x,s)\Big| ds\Big) dxdy + \int_{I} |f_3^\eps| \Big(\int_0^1 \Big|\frac{\partial w_1^\eps}{\partial s}(x,s)\Big| ds\Big) dx \Big]\\
& \leq \frac{K_1}{\eps^\gamma}\Big[\|f_2^\eps\|_{L^2(\theta_2^\eps)} \Big\|\int_0^1 \Big|\frac{\partial w_1^\eps}{\partial s}(x,s)\Big| ds\Big\|_{L^2(\theta_2^\eps)}+ \int_{I} \Big|\displaystyle\int_{1-\eps^\gamma\frac{H_\eps(x)}{K_\eps(x)}}^1 f_2^\eps(x,y) dy\Big|\Big(\int_0^1 \Big|\frac{\partial w_1^\eps}{\partial s}(x,s)\Big| ds\Big) dx \Big]\\
& \leq \frac{K_1}{\eps^\gamma}\Big[\|f_2^\eps\|_{L^2(\theta_2^\eps)} \Big(\eps^\gamma \frac{H_1}{K_0}\Big)^\frac{1}{2}\Big\| \frac{\partial w_1^\eps}{\partial y}\Big\|_{L^2(Q)}+ \int_{I} \displaystyle\int_{1-\eps^\gamma\frac{H_\eps(x)}{K_\eps(x)}}^1 |f_2^\eps| \Big(\int_0^1 \Big|\frac{\partial w_1^\eps}{\partial s}(x,s)\Big| ds\Big) dydx \Big]\\
&\leq \frac{K_1}{\eps^\gamma}\Big[\|f_2^\eps\|_{L^2(\theta_2^\eps)} \Big(\eps^\gamma \frac{H_1}{K_0}\Big)^\frac{1}{2}\Big\| \frac{\partial w_1^\eps}{\partial y}\Big\|_{L^2(Q)}+ \displaystyle\int_{\theta_2^\eps} |f_2^\eps| \Big(\int_0^1 \Big|\frac{\partial w_1^\eps}{\partial s}(x,s)\Big| ds\Big) dxdy \Big]\\
& \leq \frac{K_1}{\eps^\gamma}\Big[\|f_2^\eps\|_{L^2(\theta_2^\eps)} \Big(\eps^\gamma \frac{H_1}{K_0}\Big)^\frac{1}{2}\Big\| \frac{\partial w_1^\eps}{\partial y}\Big\|_{L^2(Q)}+  \|f_2^\eps\|_{L^2(\theta_2^\eps)} \Big\|\int_0^1 \Big|\frac{\partial w_1^\eps}{\partial s}(x,s)\Big| ds\Big\|_{L^2(\theta_2^\eps)}\Big]\\
&\leq C\eps^{-\frac{\gamma}{2}}\|f_2^\eps\|_{L^2(\theta_2^\eps)}\Big\| \frac{\partial w_1^\eps}{\partial y}\Big\|_{L^2(Q)} \leq
C_0\eps
\end{align*}
whrere $C_0$ is a positive constant wich does not depend on $\eps$. Therefore, from \eqref{31 transform} the lemma is proved.
\end{proof}

\begin{remark}
Notice that $u^\eps_1$, the solution of \eqref{transformed-problem3} coincides with $\hat w$, the solution of \eqref{transformed-problem-hat}. Indeed, by the definition of $f^\eps_3$, $f^\eps_2$ and $f^\eps_1$, we get 
\begin{align*}
 f^\eps_3(x)&=\int_{1-\eps^\gamma\frac{H_\eps(x)}{K_\eps(x)}}^1 f_1^\eps\circ S^\eps(x,y) dy=\frac{1}{\eps K_\eps(x)}\int_{\eps [K_\eps(x) -\eps^{\gamma}H_\eps(x)]}^{\eps K_\eps(x)} f^\eps_1(x,y)dy\\ 
 &=\frac{1}{\eps K_\eps(x)}\int_{\eps [K_\eps(x) -\eps^{\gamma}H_\eps(x)]}^{\eps K_\eps(x)} f^\eps \circ L^\eps(x,y)dy =
\frac{1}{\eps K_\eps(x)}\int_{\eps [k^2_\eps(x) - \eps^\gamma H_\eps(x)]}^{\eps k^2_\eps(x)} f^\eps(x,y)dy\\
&=\hat f^\eps(x)
\end{align*}
\end{remark}

After these lemmas we can provide a proof of the main result of this section.

\par\bigskip 

\par\noindent {\sl Proof of Therorem \ref{main-reduction}.} Let $w^\eps$ and $\hat w^\eps$ the solution of (\ref{1OPI0}) and (\ref{transformed-problem-hat}) respectively. Then, from \eqref{bound-jacobian} and the fact that $u_1^\eps$ does not depend on the $y$ variable, we get
\begin{align*}
&|||w^\eps-\hat w^\eps|||^2_{H^1(R^\eps)}=|||w^\eps- u_1^\eps|||^2_{H^1(R^\eps)}\\
&\leq C|||w^\eps\circ L^\eps - u_1^\eps\circ L^\eps |||^2_{H^1(R^\eps_a)}=C|||w^\eps\circ L^\eps - u_1^\eps |||^2_{H^1(R^\eps_a)}, 
\end{align*}
where $C$ is a positive constant independent of $\eps>0$. But,
\begin{align*}
&|||w^\eps\circ L^\eps - u_1^\eps |||^2_{H^1(R^\eps_a)}\leq 2|||w^\eps\circ L^\eps - v^\eps|||^2_{H^1(R^\eps_a)}+2|||v^\eps-u_1^\eps |||^2_{H^1(R^\eps_a)},
\end{align*}
where we have used the inequality $(a+b)^2\leq 2a^2+2b^2$. 

Now, using that $u^\eps=v^\eps\circ S^\eps$ and \eqref{comparison-L2}, \eqref{comparison-Dy}, \eqref{comparison-H1} and that $u_1^\eps$ is independent of $y$ and therefore $u_1^\eps\circ S^\eps=u_1^\eps$, we get 
$$|||v^\eps-u_1^\eps |||^2_{H^1(R^\eps_a)}\leq C \Big(\|u^\eps -u_1^\eps ||^2_{H^1(Q)}+\frac{1}{\eps^2}\Big\|\frac{\partial u^\eps}{\partial y}\Big\|^2_{L^2(Q)}\Big)$$
and applying now the triangular inequality
$$\leq C\Big(2 \|u^\eps-w_1^\eps\|^2_{H^1(Q)}+\frac{2}{\eps^2}\Big\|\frac{\partial (u^\eps-w_ 1^\eps)}{\partial y}\Big\|^2_{L^2(Q)}+ 2\|w_1^\eps-u_1^\eps\|^2_{H^1(Q)}+\frac{2}{\eps^2}\Big\|\frac{\partial w_1^\eps}{\partial y}\Big\|^2_{L^2(Q)}\Big).$$

Putting all this inequalities together and using Lemmas \ref{1 transform}, \ref{2 transform} and  \ref{3 transform} we prove the result. 


\section{Limit problem for the elliptic equation}

In view of Theorem \ref{main-reduction}, the homogenized limit problem of \eqref{1OPI0} will be obtained passing to the limit in the reduced problem \eqref{transformed-problem-hat}.  

As briefly explained in the introduction, we will explicitly derive the homogenized limit problem of \eqref{transformed-problem-hat} for oscillating functions satisfying {\bf (H)}. It is important to note that this situation encompasses the classical scenario where both the top and bottom boundaries are represented by the graphs of two periodic functions.

Recall that we assume that $\hat f^\eps$ defined in (\ref{fepsgorro}) satisfies the following convergence
\begin{equation}\label{limit f1}
\frac{1}{\eps^\gamma}K_\eps(\cdot) \hat f^\eps(\cdot) =\frac{1}{\eps^{\gamma+1}}\int_{-\eps k^1_\eps(x)}^{\eps k^2_\eps(x)} \chi_{\theta^\epsilon}(x,y)f^\eps(x,y)dy \weto f_0(\cdot) \quad \hbox{w-}L^2(I),
\end{equation}
 for certain $f_0\in L^2(I)$.

To begin, we establish a priori estimates of ${\hat w}^\eps$ that are independent of the specific functions $k^1_\eps(x)$ and $k^2_\eps(x)$. We leverage the fact that both oscillatory functions are uniformly bounded.

\begin{lemma}\label{est V}
Consider the variational formulation of (\ref{transformed-problem-hat}). Then, there exists $c_0>0$, independent of $\eps$, such that 
\begin{equation*}\label{a priori1 appen}
\|\hat w^\eps\|_{H^1(I)}\leq c_0.
\end{equation*}
\end{lemma}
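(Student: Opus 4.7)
The natural approach is a standard energy estimate using the weighted variational formulation of \eqref{transformed-problem-hat}. Multiplying the equation by $K_\eps \varphi$ and integrating by parts (using the Neumann boundary conditions), the variational form is
\begin{equation*}
\int_I K_\eps \hat w^\eps_x \varphi_x \, dx + \int_I K_\eps \hat w^\eps \varphi \, dx = \frac{1}{\eps^\gamma}\int_I K_\eps \hat f^\eps \varphi \, dx,\quad \forall \varphi \in H^1(I).
\end{equation*}
Testing with $\varphi = \hat w^\eps$ and using \eqref{est K} on the left-hand side immediately gives
$K_0\|\hat w^\eps\|_{H^1(I)}^2 \leq \frac{1}{\eps^\gamma}\int_I K_\eps \hat f^\eps \hat w^\eps\,dx$. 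The whole task is thus to control the right-hand side in terms of $\|\hat w^\eps\|_{L^2(I)}$ uniformly in $\eps$, so that absorption in the left yields the desired bound.

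To do that I will substitute the definition \eqref{fepsgorro} of $\hat f^\eps$, so that $\frac{1}{\eps^\gamma}K_\eps(x)\hat f^\eps(x) = \frac{1}{\eps^{\gamma+1}}\int_{\theta^\eps(x)} f^\eps(x,y)\, dy$, where $\theta^\eps(x)$ denotes the vertical slice of the concentration strip at $x$, whose length is $\eps^{\gamma+1} H_\eps(x) \leq \eps^{\gamma+1} H_1$ by \eqref{est H}. Applying Cauchy–Schwarz in $y$ on each slice gives
\begin{equation*}
\Big|\frac{1}{\eps^{\gamma+1}}\int_{\theta^\eps(x)} f^\eps(x,y)\,dy\Big| \leq \frac{H_1^{1/2}}{\eps^{(\gamma+1)/2}}\Big(\int_{\theta^\eps(x)}|f^\eps(x,y)|^2 dy\Big)^{1/2},
\end{equation*}
and then a second Cauchy–Schwarz in $x$ yields
\begin{equation*}
\Big|\frac{1}{\eps^\gamma}\int_I K_\eps \hat f^\eps \hat w^\eps \,dx\Big| \leq H_1^{1/2}\Big(\frac{1}{\eps^{\gamma+1}}\|f^\eps\|_{L^2(\theta^\eps)}^2\Big)^{1/2}\|\hat w^\eps\|_{L^2(I)}.
\end{equation*}
Recognising $\frac{1}{\eps^{\gamma+1}}\|f^\eps\|_{L^2(\theta^\eps)}^2 = \frac{1}{\eps^\gamma}|||f^\eps|||_{L^2(\theta^\eps)}^2$, the hypothesis \eqref{est f} bounds the parenthesis by the constant $c$, independently of $\eps$.

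Combining both inequalities we obtain $K_0\|\hat w^\eps\|_{H^1(I)}^2 \leq H_1^{1/2} c^{1/2}\|\hat w^\eps\|_{L^2(I)} \leq C\|\hat w^\eps\|_{H^1(I)}$, from which the uniform bound $\|\hat w^\eps\|_{H^1(I)}\leq c_0$ follows immediately. No real obstacle is expected here: the only subtlety is getting the scaling right when passing from $\hat f^\eps$ back to $f^\eps$ on the thin strip, so that the assumption \eqref{est f} on the rescaled norm $|||\cdot|||$ can be applied.
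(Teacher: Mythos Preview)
Your proof is correct and follows essentially the same approach as the paper: test the weighted variational formulation with $\hat w^\eps$, use the lower bound $K_0$ on $K_\eps$ for the left-hand side, and control the right-hand side by substituting the definition of $\hat f^\eps$, using Cauchy--Schwarz together with the strip thickness $\eps^{\gamma+1}H_\eps(x)\leq\eps^{\gamma+1}H_1$, and then invoking \eqref{est f}. The only cosmetic difference is that the paper applies Cauchy--Schwarz once over the two-dimensional strip $\theta^\eps$ (using that $\hat w^\eps$ is independent of $y$ to compute $\frac{1}{\eps^{\gamma+1}}\int_{\theta^\eps}|\hat w^\eps|^2=\int_I H_\eps|\hat w^\eps|^2$), whereas you split it into a Cauchy--Schwarz in $y$ followed by one in $x$; the resulting estimate is identical.
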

\begin{proof}
The weak formulation of \eqref{transformed-problem-hat} is given by 
\begin{equation}\label{weak-version-final-problem1}
\int_I\Big\{ K_\eps \hat w^\eps_x \phi_x + K_\eps {\hat w}^\eps\phi \big\} \,  dx =\frac{1}{\eps^\gamma}\int_I K_\eps \hat f^\eps \phi \, dx, \quad \hbox{ for all } \phi\in H^1(I).
\end{equation}
Considering ${\hat w}^\eps$ as a test function in \eqref{weak-version-final-problem1} and using (\ref{boundk}), (\ref{est H}) and (\ref{est f}),   we get   
\begin{align*}\|\hat w^\eps\|^2_{H^1(I)}  & \leq 
C\frac{1}{\eps^{\gamma + 1}}\int_I\int_{-\eps k^1_\eps(x))}^{\epsilon \, k^2_\eps(x))}\chi_{\theta^\epsilon}(x,y) f^\eps (x, y) \hat w^\eps(x) dydx\\
&\leq  C\Big(\frac{1}{\eps^{\gamma + 1}}\int_{\theta^\eps }|f^\eps|^2dxdy\Big)^\frac{1}{2}\Big(\frac{1}{\eps^{\gamma + 1}}\int_{\theta^\eps }|\hat w^\eps|^2 dxdy\Big)^\frac{1}{2}\\
&\leq C\Big(\frac{1}{\eps^\gamma}|||f^\eps|||_{L^2(\theta^\eps)}^2\Big)^\frac{1}{2}\Big(\int_I  H_\eps|\hat w^\eps|^2 dx\Big)^\frac{1}{2}\\
&\leq C_1\|\hat w^\eps\|_{L^2(I)},
\end{align*}
where $C$ and $C_1$ are positive constants independents of $\eps>0$, so we obtain
$$\|\hat w^\eps\|_{H^1(I)}\leq c_0,$$ where $c_0$ is a positive constant independent of $\eps>0$.
\end{proof}

Thus, by weak compactness there exists $\hat{w} \in H^1(I)$ such that, up to subsequences
\begin{equation}\label{weak u1 appen}
 {\hat w}^\epsilon \weto \hat{w} \quad  \hbox{ w}-H^1(I).
 \end{equation}

Now, we are poised to derive the homogenized limit problem using the classical approach for homogenization of variable coefficients in one dimensional problems. Observe that  $K_\eps \hat w^\eps_x$ is uniformly bounded in $L^2(I)$ since 
$$||\hat w^\eps_x||_{L^2(I)}\leq C \quad  \hbox{ and } \quad 0<K_\eps(x)< C^1_2 +C^2_2, \hbox{ for each } x \in I.$$
Moreover, taking into account that $$
(K_\eps \hat w^\eps_x)_x= -\frac{1}{\eps^\gamma}K_\eps\hat f^\eps + K_\eps {\hat w}^\epsilon,
$$
we deduce that  $K_\eps \hat w^\eps_x$ is uniformly bounded in $H^1(I).$ Then, it follows that there exists a function $\sigma$ such that, up to subsequences,
$$K_\eps \hat w^\eps_x\longrightarrow \sigma \quad \hbox{strongly in } L^2(I).$$
Thus, using  {\bf(H.4)} we have
$$ \hat w^\eps_x = \frac{1}{K_\eps} \Big(K_\eps \hat w^\eps_x\Big)\,{\weto} \,P \sigma \quad  \hbox{ w}-L^2(I). $$

Consequently, due to convergence \eqref{weak u1 appen} we have
$$\hat{w}_x= P \sigma,$$
or equivalently,
\begin{equation}\label{product conv appen}
  K_\eps \hat w^\eps_x\eto \frac{1}{P} \hat{w}_x\quad \hbox{strongly in } L^2(I).
\end{equation}

Therefore, in view of \eqref{limit f1}, \eqref{weak u1 appen}  and \eqref{product conv appen} we can pass to the limit in (\ref{weak-version-final-problem1}) and we obtain the following weak formulation
$$\int_I \Big\{\frac{1}{P} \hat{w}_x  \phi_x + (K^1+K^2) \hat{w}\phi\Big\}\, dx = \int_I f_0\phi\, dx.$$

With this, Theorem \ref{main appen} is proven.

\section{Quasi-periodic and almost periodic setting}
In this section, we will analyze specific and interesting cases that satisfy the hypotheses {\bf (H)}. We will begin by examining the scenario where 
the oscillating boundary is given by quasi-periodic functions. In particular, we consider
\begin{equation}\label{quasiper}
 k^1_\eps(x)=h(x/\eps^\alpha), \quad k^2_\eps(x)=g(x/\eps^\beta),
 \end{equation}
where $0<\alpha, \beta<1$ and the functions $g,h \,: \R \to \R $ are $C^1$ quasi-periodic functions verifying 
\begin{align}
&0\leq h_0\leq h(\cdot)\leq h_1,\label{cota h}\\
&0<g_0 \leq g(\cdot)\leq g_1.\label{cota g}
\end{align}

Therefore $g,h \,: \R \to \R $ present a combination of multiple frequencies that are rationally independent. That is,  there exist two $C^1$ periodic functions $\Bar{h} \,: \R^n \to \R $ and $\Bar{g}\,:\R^m \to \R$ , $n,m\in \mathbb{N}$, such that
 \begin{equation}\label{quasi}
 h(x)=\Bar{h}(\underbrace{x,\cdots,x}_{n}),\quad
 g(x)=\Bar{g}(\underbrace{x,\cdots,x}_{m}).
 \end{equation}
 Notice that $\Bar{h}$ and $\Bar{g}$ are periodic with respect to each of their arguments. For example in the case of $\Bar{h}$, for all $1\leq j\leq n$, there exists $L_j>0$ such that $\Bar{h}(x_1,\cdots,x_j+L_j,\cdots,x_n)=\Bar{h}(x_1,\cdots,x_j,\cdots,x_n)$ for all $x \in \mathbb{R}^n$. Analogously, this holds for $\Bar{g}$. 
 
We define $QP(L)$ as the set of quasi-periodic functions associated with $L=(L_1,\cdots,L_n)$. The positive numbers $L_1,\cdots,L_n$ are referred to as quasi-periods. Therefore we will assume that  $h \in QP(L^h)$ and $g \in QP(L^g)$, where $L^h$ and $L^g$ are vectors in $\mathbb{R}^n$ and $\mathbb{R}^m$, respectively. 

It is noteworthy that when $n=1$ and $m=1$, the scenario reverts to the conventional periodic case.Furthermore, to denote periodic functions associated with quasi-periodic functions, we will use letters with bars.

It is not restrictive to assume that the associated frequencies to the quasi-periods $L=(L_1,\cdots,L_n)$ are linearly independet on $\mathbb{Z}$. Under this assumption, Kronecker's Lemma (see Appendix of \cite{Brai}) guarantees that $\bar{h}$ and $\bar{g}$ are uniquely determined by $h$ and $g$ respectively.

Using properties of quasi-periodic functions we obtain explicitly the homogenized limit problem of \eqref{transformed-problem-hat}  when the oscillating boundaries are given by \eqref{quasiper}.
We just need to prove that these particular functions $k^1_\eps$ and $k^2_\eps$ satisfy hypothesis {\bf (H)}.

First, notice that 
$$\eps\frac{\partial k^1_\eps}{\partial x}(x)=\eps^{1-\alpha}\frac{\partial h}{\partial x}\Big(\frac{x}{\eps^\alpha}\Big), \quad \eps\frac{\partial k^2_\eps}{\partial x}(x)=\eps^{1-\beta}\frac{\partial g}{\partial x}\Big(\frac{x}{\eps^\beta}\Big).$$
Then, since $0<\alpha, \beta<1$ we directly have {\bf (H.1)}.

As {\bf (H.2)} is immediately verified by hypothesis, we will focus on proving weak convergences {\bf (H.3)} and {\bf (H.4)} for quasi-periodic functions with multiple scales.

Taking into account the definition of $g$ and $h$ we can assume  that $K(x)=g(x) + h(x)$ is a quasi-periodic function with quasi-periods $L=(L_1,L_2,\cdots, L_{n+m})$ where the quasi-periods are not necessarily rationally independent. Therefore, there exists a $L-$periodic function $\bar K:\mathbb{R}^{n+m}\rightarrow \mathbb{R}$ such that $K$ is the trace of $\bar K$ in the sense of 
$$K(x)=\bar K(x,x,\cdots,x), \quad \forall x\in \mathbb{R}.$$

In addition, $K$ has an average, see Proposition 1.2 in \cite{BlancBris}, which is defined as follows
$${\displaystyle \mu(K)=\lim_{T\to \infty} \frac{1}{2T}\int_{-T}^{T} \big(g(y) + h(y)\big)\, dy= \frac{1}{|I(L)|}\int_{I(L)} \bar K(x_1,\cdots,x_{n+m})\, dx_1\cdots dx_{n+m},}$$
where $I(L)=(0,L_1)\times(0,L_2)\times \cdots \times (0,L_{n+m})$.

Since $K(\cdot)>0$ we can conclude that $\frac{1}{K}$ belongs to $QP(L)$ and its average is given by
$${\displaystyle \mu\Big(\frac{1}{K}\Big)=\lim_{T\to \infty} \frac{1}{2T}\int_{-T}^{T} \frac{1}{g(y) + h(y)}\, dy= \frac{1}{|I(L)|}\int_{I(L)} \frac{1}{\bar K(x_1,\cdots,x_{n+m})}\, dx_1\cdots dx_{n+m}.}$$

Below we show the convergence obtained for quasi-periodic functions with different oscillation scales.

\begin{proposition}
Let $F_\eps: \mathbb{R}\rightarrow \mathbb{R}$ be a quasi-periodic  function defined by
$$F_\eps(x)=\bar{F}\Big(\frac{x}{\eps^\alpha},\frac{x}{\eps^\alpha},\cdots\frac{x}{\eps^\alpha},\frac{x}{\eps^\beta}\cdots,\frac{x}{\eps^\beta}\Big),$$
where $\bar{F} \in \mathbb{R}^{n+m}$ is a $L-$periodic function. The following weak convergence holds
\begin{equation}\label{weakconv}
F_\eps \weto \mu(\bar{F}) \quad w-L^2(I).
\end{equation}
\end{proposition}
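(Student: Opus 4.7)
The plan is to reduce to pure Fourier modes via Stone--Weierstrass on the torus and then exploit the fact that such modes, specialized along the diagonal curve $x\mapsto (x/\eps^\alpha,\ldots,x/\eps^\alpha,x/\eps^\beta,\ldots,x/\eps^\beta)$, become scalar complex exponentials whose frequencies blow up as $\eps\to 0$. Since $\bar F$ is continuous and $L$-periodic, $\{F_\eps\}$ is uniformly bounded in $L^\infty(I)\hookrightarrow L^2(I)$, so weak convergence in $L^2(I)$ need only be checked against a dense family of test functions---I would take indicator functions of subintervals $[c,d]\subset I$.

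For any $\delta>0$, Stone--Weierstrass produces a trigonometric polynomial
\begin{equation*}
P_\delta(y) = \sum_{|k|_\infty \leq N_\delta} c_k^\delta \exp\!\Big(2\pi i \sum_{j=1}^{n+m} \tfrac{k_j y_j}{L_j}\Big)
\end{equation*}
with $\|\bar F - P_\delta\|_\infty < \delta$ and $|c_0^\delta - \mu(\bar F)| < \delta$. Writing $F_\eps - \mu(\bar F)$ as a telescoping sum through $P_\delta^\eps$ (the specialization of $P_\delta$ to the diagonal) and $c_0^\delta$, a standard three-$\delta$ argument reduces the proposition to showing that for each nonzero $k\in\mathbb{Z}^{n+m}$ the pure exponential
\begin{equation*}
F_\eps^k(x) = \exp(2\pi i x\,\omega_\eps^k),\qquad \omega_\eps^k := \frac{A_k}{\eps^\alpha}+\frac{B_k}{\eps^\beta},
\end{equation*}
with $A_k=\sum_{j=1}^n k_j/L_j$ and $B_k=\sum_{j=n+1}^{n+m}k_j/L_j$, converges weakly to $0$. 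This follows once one has $|\omega_\eps^k|\to\infty$, since then the explicit estimate $\bigl|\int_c^d e^{2\pi i x\omega_\eps^k}\,dx\bigr|\leq 1/(\pi|\omega_\eps^k|)\to 0$ (direct integration, i.e.\ Riemann--Lebesgue) closes the case of indicator test functions.

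The central step---and the main obstacle---is proving $|\omega_\eps^k|\to\infty$ for every $k\neq 0$. Assuming without loss of generality $\alpha<\beta$, one computes $\omega_\eps^k\eps^\beta = A_k\eps^{\beta-\alpha}+B_k\to B_k$: if $B_k\neq 0$ then $|\omega_\eps^k|\sim|B_k|\eps^{-\beta}\to\infty$; if $B_k=0$, the $\mathbb{Z}$-linear independence of the frequencies $1/L^g_j$ (hypothesized on $g$ and recorded via Kronecker's lemma in the paper) forces $k_{n+1}=\cdots=k_{n+m}=0$, whence $(k_1,\ldots,k_n)\neq 0$, and the analogous independence for $1/L^h_j$ yields $A_k\neq 0$ and $|\omega_\eps^k|=|A_k|\eps^{-\alpha}\to\infty$. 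This is exactly where the paper's assumption of rational independence \emph{within} each quasi-period block, together with the disparity of scales $\alpha\neq\beta$, is indispensable: without them one could have nonzero Fourier modes $k$ with $A_k=B_k=0$, producing $\omega_\eps^k\equiv 0$ and nonvanishing contributions in the limit that the averaging cannot absorb. Assembling the mode-by-mode convergence with the polynomial approximation step then delivers the weak convergence $F_\eps\weto\mu(\bar F)$.
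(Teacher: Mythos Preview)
Your proof is correct and follows essentially the same strategy as the paper's: approximate $\bar F$ uniformly by trigonometric polynomials, integrate each nonzero exponential mode explicitly over a subinterval to see it vanishes, and conclude via density of step functions in $L^2$. You are in fact more explicit than the paper in justifying why the effective frequency $\omega_\eps^k$ diverges for $k\neq 0$ (the paper simply writes the antiderivative and asserts the limit), carefully invoking the rational-independence hypothesis within each block and the scale disparity $\alpha\neq\beta$.
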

\begin{proof}
We just prove the result for $F_\eps$ a trigonometric polinomial of quasi-periods $L$  since the set of trigonometric polynomials of quasi-periods $L$ is dense en $QP(L)$ for the uniform norm, see \cite{BlancBris}.
Therefore, we have
$$F_{\eps}(x)=\sum_{k \in \mathbb{Z}^{n+m}}a_k\, e^{2\pi i \sum_{j=1}^n\frac{k_jx}{L_j \eps^\alpha}} e^{2\pi i \sum_{j=n+1}^{n+m}\frac{k_jx}{L_j \eps^\beta}},$$
where the sequence of $(a_k)_{k \in \mathbb{Z}^{N}}$ vanishes except for a finite number of values of $k$.
Notice that, for any interval $I=(a,b)$  we have
\begin{align*}
\int_a^b F_\eps(x)dx&=(b-a)a_0+\eps^{\alpha+\beta}\sum_{k\neq0}\frac{a_k e^{2\pi i \sum_{j=1}^n\frac{k_jb}{L_j \eps^\alpha}}e^{2\pi i \sum_{j=n+1}^{n+m}\frac{k_jb}{L_j \eps^\beta}}}{2i\pi(\eps^\beta \lambda_k+\eps^\alpha \mu_k)}\\
&-\eps^{\alpha+\beta}\sum_{k\neq 0}\frac{a_ke^{2\pi i \sum_{j=1}^n\frac{k_ja}{L_j \eps^\alpha}}e^{2\pi i \sum_{j=n+1}^{n+m}\frac{k_ja}{L_j \eps^\beta}}}{2i\pi(\eps^\beta \lambda_k+\eps^\alpha \mu_k)}\eto(b-a)a_0=(b-a)\mu(F),
\end{align*}
where $\lambda_k=\sum_{j=1}^n\frac{k_j}{L_j}$ y $\mu_k=\sum_{j=n+1}^n+m\frac{k_j}{L_j}$. 

Thus, for any piecewise constant compactly supported function $\varphi$ we obtain
$$\int F_\eps \varphi \eto \varphi \mu(F).$$
Using that piecewise constant functions are dense in $L^2(\mathbb{R})$  we have the result.
\end{proof}

Consequently, we obtain that {\bf (H.3)} and {\bf (H.4)} are satisfied for the particular case where both oscillating boundaries are given by quasi-periodic functions.

\begin{corollary}
Let $K_\eps(x)=h\big(\frac{x}{\eps^\alpha}\big)+g\big(\frac{x}{\eps^\beta}\big)$ be a quasi-periodic function with two different scales, we have
\begin{equation*}
K_\eps \weto \mu(K) \quad \text{w}-L^2(I), \quad \frac{1}{K_\eps}=\frac{1}{g\big(\frac{\cdot}{\eps^\beta}\big)+h\big(\frac{\cdot}{\eps^\alpha}\big)}\weto \mu\Big(\frac{1}{K}\Big) \quad \hbox{w}-L^2(I).
\end{equation*}
\end{corollary}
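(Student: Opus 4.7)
The plan is to derive both weak convergences as direct consequences of the preceding Proposition, applied to suitable $L$-periodic extensions on $\mathbb{R}^{n+m}$.

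First, I would identify the associated periodic function for $K_\eps$. Using the representation \eqref{quasi} of $h$ and $g$, define $\bar{K}: \mathbb{R}^{n+m} \to \mathbb{R}$ by
$$\bar{K}(y_1,\ldots,y_n,z_1,\ldots,z_m) = \bar{h}(y_1,\ldots,y_n) + \bar{g}(z_1,\ldots,z_m).$$
Since $\bar{h}$ is $L^h$-periodic in each of its $n$ arguments and $\bar{g}$ is $L^g$-periodic in each of its $m$ arguments, $\bar{K}$ is $L$-periodic with $L=(L_1^h,\ldots,L_n^h,L_1^g,\ldots,L_m^g)$. Moreover,
$$K_\eps(x) = h(x/\eps^\alpha) + g(x/\eps^\beta) = \bar{K}\bigl(x/\eps^\alpha,\ldots,x/\eps^\alpha,x/\eps^\beta,\ldots,x/\eps^\beta\bigr),$$
which is exactly the form treated in the Proposition. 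Hence $K_\eps \weto \mu(\bar{K}) = \mu(K)$ in $w\text{-}L^2(I)$.

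Second, I would handle the reciprocal $1/K_\eps$. The crucial point is that $K(x) = h(x) + g(x) \geq g_0 > 0$ by \eqref{cota g}, and the same lower bound holds pointwise on $\mathbb{R}^{n+m}$ for $\bar{K}$ since $\bar{g}$ takes values in the same range as $g$ by density of the orbit (or directly from the construction of $\bar{g}$). Therefore $1/\bar{K}$ is a well-defined $C^1$ function, still $L$-periodic on $\mathbb{R}^{n+m}$. The representation
$$\frac{1}{K_\eps}(x) = \frac{1}{\bar{K}}\bigl(x/\eps^\alpha,\ldots,x/\eps^\alpha,x/\eps^\beta,\ldots,x/\eps^\beta\bigr)$$
again fits the Proposition, yielding $1/K_\eps \weto \mu(1/\bar{K}) = \mu(1/K)$ in $w\text{-}L^2(I)$.

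There is essentially no serious obstacle, since the Proposition is tailored exactly for these situations; the Corollary is really a reformulation of that result for the two quasi-periodic combinations that appear in hypotheses \textbf{(H.3)} and \textbf{(H.4)}. The only small verifications are the periodicity of $\bar{K}$ (immediate from the product structure) and the strict positivity needed for $1/\bar{K}$ to inherit the required $C^1$ and periodic structure, which follows from \eqref{cota g}.
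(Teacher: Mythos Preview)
Your proposal is correct and follows exactly the approach of the paper, which simply states ``Trivial from the previous Theorem'' (i.e., the Proposition on weak convergence of $F_\eps$). You have merely made explicit the identification of $\bar K$ and $1/\bar K$ as the $L$-periodic functions to which the Proposition applies, together with the positivity check needed for the reciprocal; nothing further is required.
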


\begin{proof}
Trivial from the previous Theorem.
\end{proof}

Therefore, we are in conditions of Theorem \ref{main appen} and with the definition of $f_0$ given by \eqref{limit f1} and denoting by $\hat f=\frac{f_0}{\mu(g)+\mu(h)}$,  then we have  we obtain the following convergence result for the particular case of quasi-periodic functions:

$$\hat w^\eps \to \hat w, \hbox{ w}-H^1(I),$$ 
$$|||w^\eps - \hat w|||_{L^2(R^\eps)} \to 0,$$
 where   $\hat w \in H^1(I)$ is the weak solution of  the following Neumann problem
\begin{equation*}\label{homogenized problem1}
\left\{
\begin{gathered}
-\frac{1}{P{\big(\mu}(g)+{\mu}(h)\big)}  {\hat w}_{xx} + \hat w = \hat f, \quad x \in I, \\
\hat w'(0) = \hat w' (1) = 0,
\end{gathered}
\right.
\end{equation*}
where the constant $P$ is such that 
\begin{equation*}
 \frac{1}{h\Big(\frac{x}{\eps^\alpha}\Big) + g\Big(\frac{x}{\eps^\beta}\Big)} \weto P \quad \text{w}-L^2(I).
\end{equation*}
 Therefore $P$ is given by

$$
P
= \frac{1}{|I(L)|}\int_{I(L)} \frac{1}{F(x_1,\cdots,x_{m+n})}\, dx_1\cdots dx_{m+n}\,.$$

\begin{remark}
Notice what happend in the particular case where $g$ and $h$ are $L_1-$periodic and $L_2$-periodic respectively. In \cite{ArrVi2020} the authors have the following convergence
$$\frac{1}{G_\eps}=\frac{1}{g(\frac{\cdot}{\eps^\beta})+h(\frac{\cdot}{\eps^\alpha})}\weto  \frac{1}{p_0},$$
where $p_0$ is defined as follows
\begin{eqnarray*}
\frac{1}{p_0}
= 
 \left\{ 
 \renewcommand{\arraystretch}{2.5}
\begin{array}{ll}
{\displaystyle \lim_{T\to \infty} \frac{1}{T}\int_{0}^{T} \frac{1}{g(y) + h(y)}\, dy, \quad \hbox{if } \alpha=\beta,}  \\
 {\displaystyle\frac{1}{L_1L_2}\int_{0}^{L_1}\int_{0}^{L_2}\frac{1}{g(y) + h(z)}\, dz dy, \quad \hbox{if } \alpha \neq \beta.}
\end{array}
\right.
\end{eqnarray*}

Notice that this result is in complete accordance with the previous proposition. For this particular case, it is found that $P=\frac{1}{p_0}$. In fact, for the case $\alpha\neq \beta$ and for $\alpha=\beta$ we have:
\begin{align*}
\displaystyle P&=\mu\Big(\frac{1}{g+h}\Big)=\lim_{T\to \infty} \frac{1}{T}\int_{0}^{T} \frac{1}{g(y) + h(y)}\, dy\\
&=\frac{1}{L_1L_2}\int_{0}^{L_1}\int_{0}^{L_2}\frac{1}{g(y) + h(z)}\, dz dy=\frac{1}{p_0}.
\end{align*}
\end{remark}

We can also write Theorem \ref{main appen} for almost periodic functions in the sense of Besicovitch, see \cite{Be}. We just have to take into account that
the set of almost periodic functions in $\mathbb{R}$ is the closure of the set of trigonometric polynomial for the the mean square norm (or Besicovitch norm), defined for a $P_n$ trigonometric polynomial as follows:
$$
\|P_n\|_2 = \left( \limsup_{T \to \infty} \frac{1}{2T} \int_{-T}^{T} |P_n(t)|^2 \, dt \right)^{1/2}.$$

Therefore, it is obvious, by means of the previous approximation, that any almost periodic oscillatory functions satisfy hipothesis {\bf (H)}. Therefore, if the thin domain is given by two almost periodic functions $g$ and 
$h$ as follows:  
$$R^\epsilon = \Big\{ (x, y) \in \R^{2} \; | \;  x \in I \subset \R,  \; -\eps g(x/\eps^\alpha)< y < \epsilon h(x/\eps^\beta) \Big\}.$$
Then, in condictions of Theorem \ref{main appen}  and denoting by $\hat f=\frac{f_0}{\mu(g)+\mu(h)}$, we can guarantee:

$$\hat w^\eps \to \hat w, \hbox{ w}-H^1(I),$$ 
$$|||w^\eps - \hat w|||_{L^2(R^\eps)} \to 0,$$
 where   $\hat w \in H^1(I)$ is the weak solution of  the following Neumann problem
\begin{equation*}\label{homogenized problem1}
\left\{
\begin{gathered}
-\frac{1}{P{\big(\mu}(g)+{\mu}(h)\big)}  {\hat w}_{xx} + \hat w = \hat f, \quad x \in I, \\
\hat w'(0) = \hat w' (1) = 0,
\end{gathered}
\right.
\end{equation*}
where the constant $P=\limsup_{T \to \infty} \frac{1}{2T} \int_{-T}^{T} \frac{1}{g(t)+h(t)} \, dt.$

\section{Numerical evidences}
In this section, we numerically investigate the behavior of the solutions to equation \eqref{1OPI0} as \(\varepsilon\) approaches zero, where the thin domain is defined by the graph of a quasi-periodic function, see Figure \ref{exthin1f}.

Then, we consider the following particular thin domain:

\begin{equation}\label{exthin1}
\begin{split}
R^\epsilon = \Big\{ (x, y) \in \R^{2} \; | \;  x \in (0,20), \; &-\eps\Big(8-\sin\Big(\frac{x}{\sqrt[5]{\eps}}\Big)-\sin\Big(\frac{x\pi}{8\sqrt[5]{\eps}}\Big)\Big)< y \\
&< \epsilon\Big(8+\sin\Big(\frac{x}{\sqrt[5]{\eps}}\Big)+\sin\Big(\frac{x\pi}{8\sqrt[5]{\eps}}\Big)\Big) \Big\}.
\end{split}
\end{equation}

 Moreover, the narrow strip $\theta^\epsilon$ is defined by 

\begin{align*}
\theta^\epsilon = \Big\{ (x, y) \in \R^2 \; | \;  \;  x \in (0,20), \; &\epsilon\Big(8+\sin\Big(\frac{x}{\sqrt[5]{\eps}}\Big)+\sin\Big(\frac{x\pi}{8\sqrt[5]{\eps}}\Big) -\sqrt[18]{\eps}\Big(2+\sin\Big(\frac{x}{\sqrt[3]{\eps}}\Big)\Big)\Big)<y\\
&< \epsilon\Big(8+\sin\Big(\frac{x}{\sqrt[5]{\eps}}\Big)+\sin\Big(\frac{x\pi}{8\sqrt[5]{\eps}}\Big)\Big) \Big\}.
\end{align*}

\begin{figure}[!htp]
\begin{center}
\includegraphics[scale=0.3]{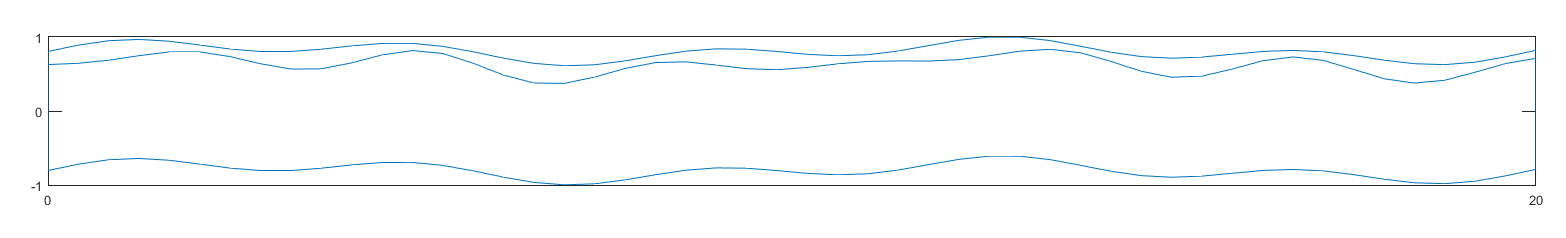}
\caption{Thin domain $R^\epsilon$ featuring the slender strip $\theta^\epsilon$ for $\epsilon=1$. }
\label{exthin1f}
\end{center}
\end{figure}

The problem was discretized using a triangular mesh that is finer in the narrow strip, see Figure \ref{mesh}.

\begin{figure}[!htp]
\centering
\includegraphics[scale=0.3]{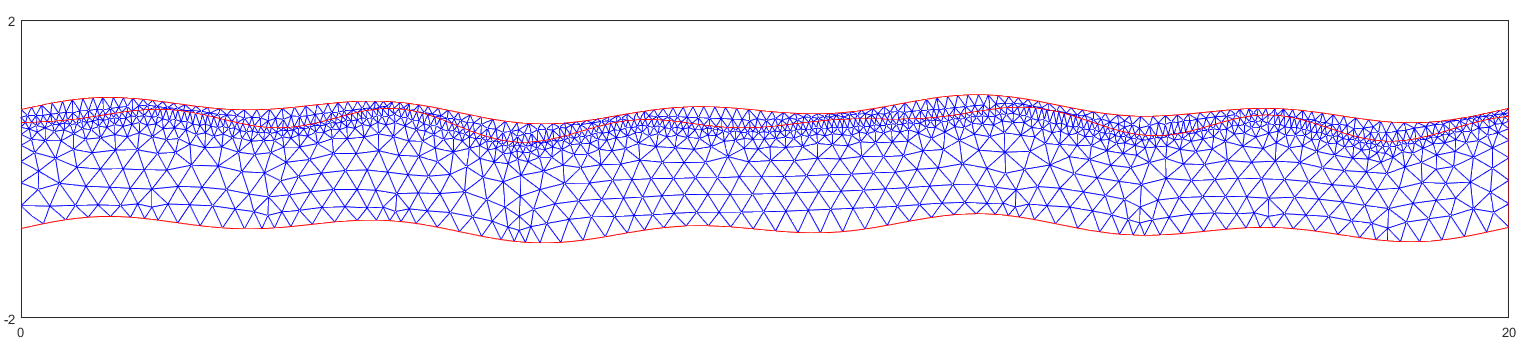}
\caption{Triangular mesh for the slender domain with finer density in the strip.}
\label{mesh}
\end{figure}

We analyze the behavior of the solutions as $\eps$ tends to zero taking the forcing term $f$ as $f(x)=1+sin(x)$. 

\begin{figure}[!htp]
  \centering
  \begin{subfigure}{0.35\textwidth}
    \centering
    \includegraphics[width=\linewidth]{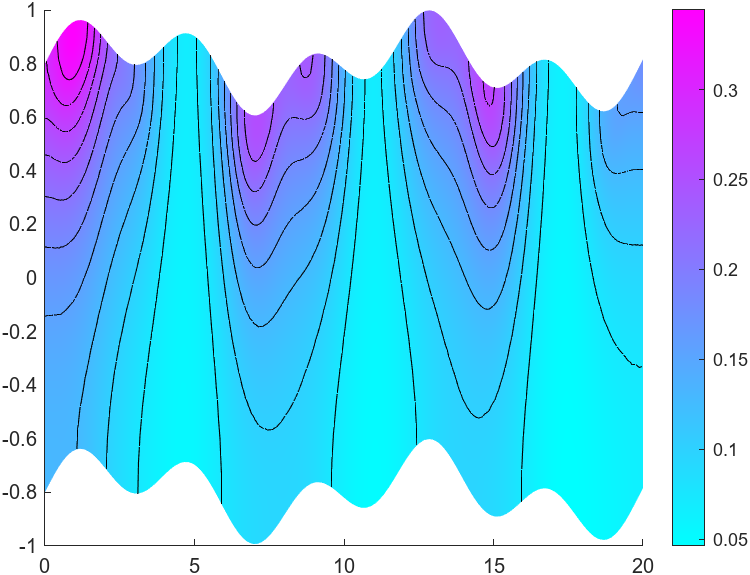}
    \caption{$\varepsilon=0.1$}
  \end{subfigure}
  \hfill
  \begin{subfigure}{0.55\textwidth}
    \centering
    \includegraphics[width=\linewidth]{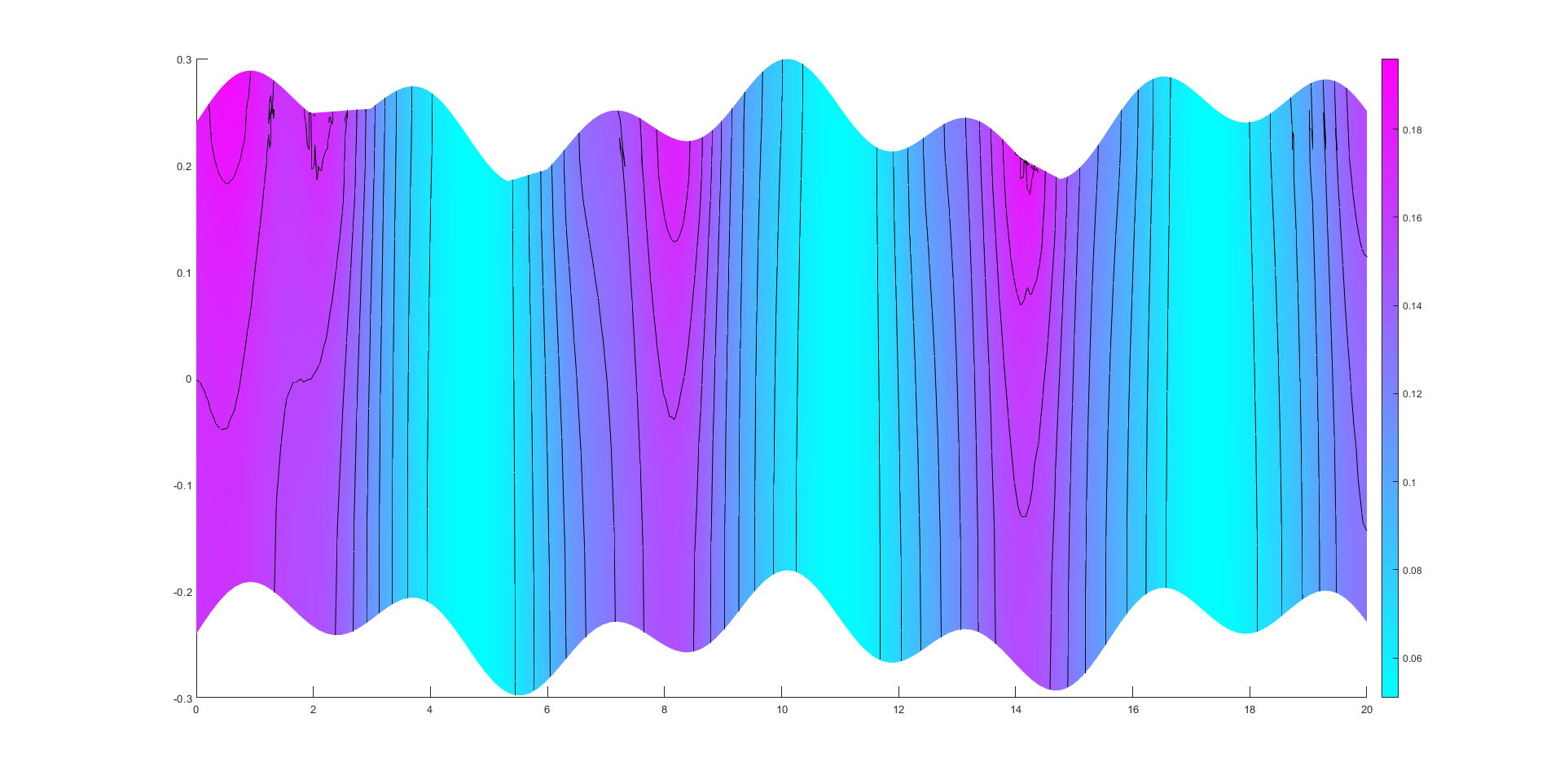}
    \caption{$\varepsilon=0.04$}
  \end{subfigure}
  
  \vspace{0.5cm}
  
  \begin{subfigure}{0.6\textwidth}
    \centering
    \includegraphics[width=\linewidth]{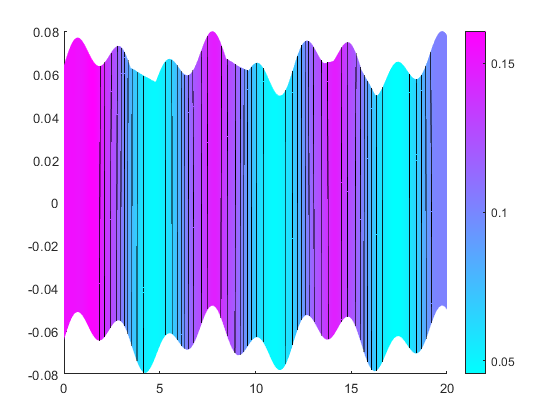}
    \caption{$\varepsilon=0.0008$}
  \end{subfigure}
  \caption{Contour levels of solutions for some values of $\eps$.}
  \label{contoursol}
\end{figure}

First we show a color map of the solutions with the corresponding contours levels for different values of $\eps$. For higher values of $\epsilon$, the dependence of the solutions on two dimensions can be seen, with significant variations in $y$ due to the applied force in the boundary strip.It can be clearly observed, Figure \ref{contoursol}, that as $\epsilon$ becomes small, the dependence of the vertical variable on the solutions disappears. This is consistent with the fact that the domain is thinning out, progressively shrinking in the vertical direction. This fact can also be observed in Figure \ref{u3d} where the solution was represented for two values of $\epsilon$.
While for $\epsilon=0.2$, it is clear that the solution undergoes dramatic changes from $y=-2$ to $y=2$, for $\epsilon=0.05$, it essentially maintains the same profile.

\begin{figure}[!htp]
  \centering
  \begin{subfigure}{0.45\textwidth}
    \centering
    \includegraphics[width=\linewidth]{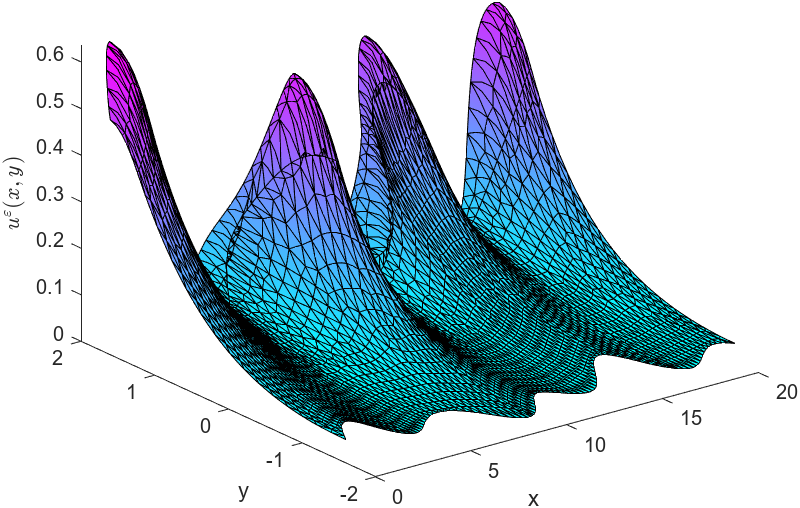}
    \caption{$\varepsilon=0.2$}
    \label{fig:subfig1}
  \end{subfigure}
  \hfill
  \begin{subfigure}{0.45\textwidth}
    \centering
    \includegraphics[width=\linewidth]{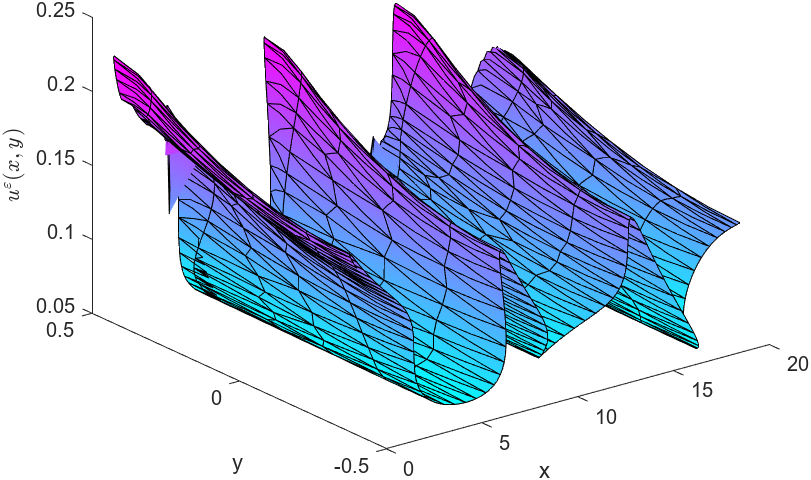}
    \caption{$\varepsilon=0.05$}
    \label{fig:subfig2}
  \end{subfigure}
   \caption{Solution for some values of $\eps$.}
  \label{u3d}
\end{figure}

This phenomenon becomes more apparent in Figure \ref{comparison0}, where slices of the solutions at $y=-0.2, 0, 0.2$ for various $\epsilon$ values are displayed. It is observed that as $\epsilon$ decreases, the similarity among the slices increases significantly.

\begin{figure}[!htp]
  \centering
  \begin{subfigure}{0.22\textwidth} 
    \centering
    \includegraphics[width=\linewidth]{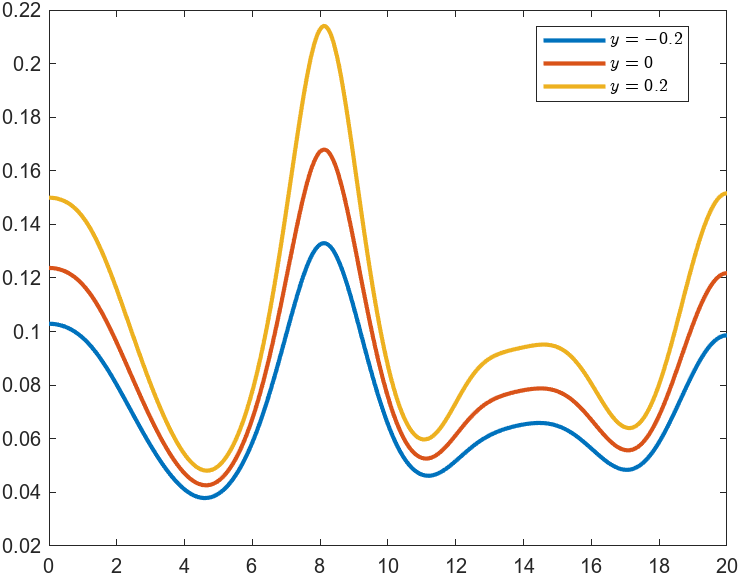}
    \caption{$\varepsilon=0.2$}
  \end{subfigure}
  \quad 
  \begin{subfigure}{0.22\textwidth}
    \centering
    \includegraphics[width=\linewidth]{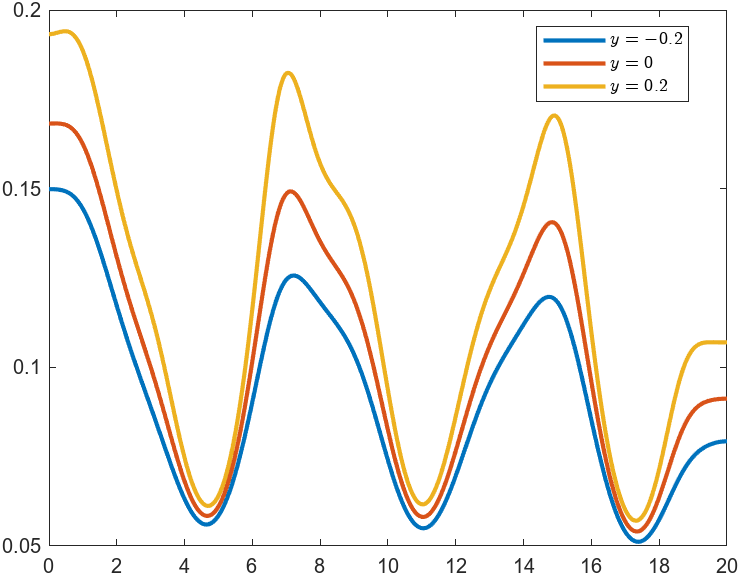}
    \caption{$\varepsilon=0.1$}
  \end{subfigure}
  
  \vspace{0.5cm} 
  
  \begin{subfigure}{0.22\textwidth}
    \centering
    \includegraphics[width=\linewidth]{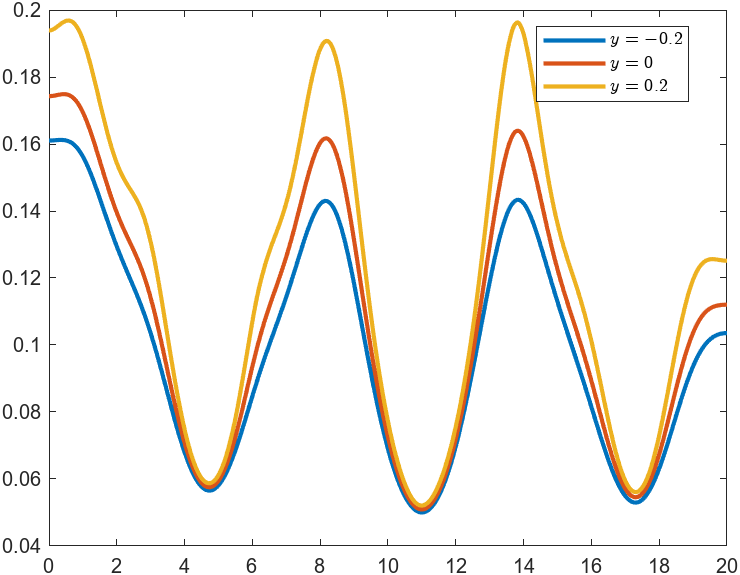}
    \caption{$\varepsilon=0.07$}
  \end{subfigure}
  \quad 
  \begin{subfigure}{0.22\textwidth}
    \centering
    \includegraphics[width=\linewidth]{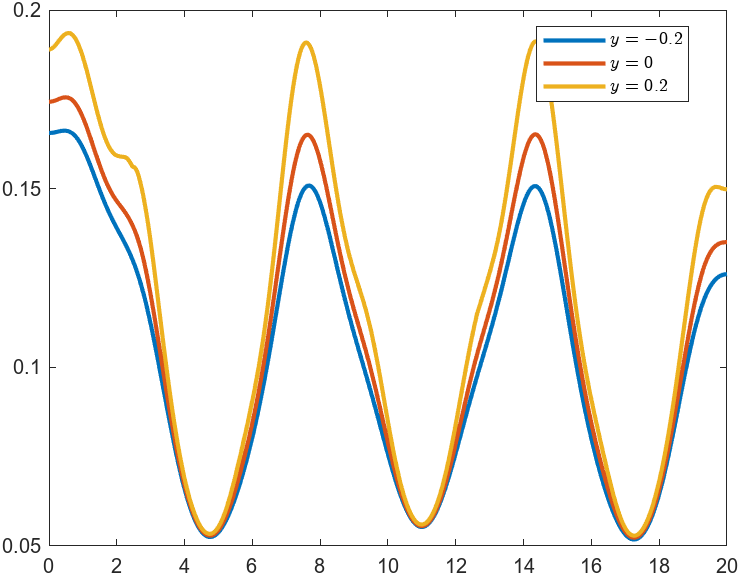}
    \caption{$\varepsilon=0.05$}
  \end{subfigure}
  \caption{Solution for different values of $y$.}
  \label{comparison0}
\end{figure}

This leads us to attempt to compare the solution for increasingly smaller values of $\epsilon$ with the solution of the limit problem. In Figure \ref{comparisonlimit}, we represent the solutions for different values of $\epsilon$ with the vertical variable fixed at $y=0$, along with the solution to the limit equation. It appears to be inferred that, as expected, as $\epsilon$ becomes smaller, the solution of the problem in the thin domain increasingly resembles the limit solution.

\begin{figure}[!htp]
\centering
\includegraphics[scale=0.6]{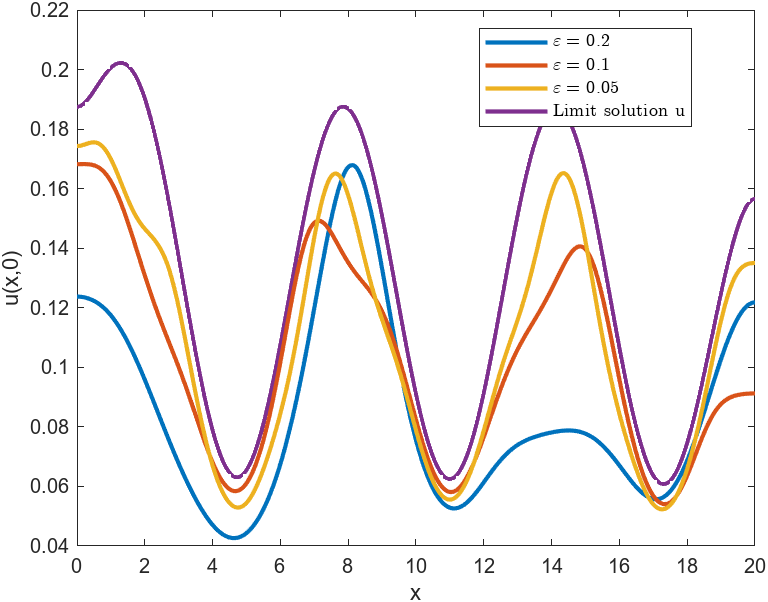}
\caption{$u^\eps(x,0)$ and the limit solution $u(x)$. }
\label{comparisonlimit}
\end{figure}

Since Theorem \ref{main appen} establishes strong convergence of the solutions to the limit problem's solution, we have also examined the $L^2$ norm of the difference for various values of $\epsilon$. In particular, for $\eps=0.1$ we get that the norm of the difference in the thin domain is $1.7853$, for $\eps=0.08$ is $1.032052$ and if $\eps=0.04$ the norm of the difference is $0.467611$. Therefore,  the error norm appears to decrease linearly.

A natural question is whether such approximation results can be improved in order to describe the
asymptotic behavior of the Dynamical System generated by the parabolic equation associated with \eqref{1OPI0}
posed in more general thin regions of $\R^n$. It is our goal to investigate this question in a forthcoming paper.

\printbibliography

\medskip

\end{document}